\long\def\symbolfootnote[#1]#2{\begingroup%
\def\thefootnote{\fnsymbol{footnote}}\footnote[#1]{#2}\endgroup}
\theoremstyle{plain} 
\newtheorem{theorem}{Theorem}[section]
\newtheorem{lemma}{Lemma}[section]
\newtheorem{corollary}{Corollary}[section]
\theoremstyle{definition} 
\newtheorem{remark}{Remark}[section]
\numberwithin{equation}{section} 
\newcommand{\N}{ \mathbb{N} }
\newcommand{\Z}{ \mathbb{Z} }
\newcommand{\R}{ \mathbb{R} }
\newcommand{\ito}{It\^o }
\newcommand{\trunc}[1]{ {\lfloor #1 \rfloor} }
\newcommand{\wh}[1]{ \widehat{ #1 } }
\newcommand{\wt}[1]{ \widetilde{ #1 } }
\newcommand{\calF}{\mathcal{F}}
\newcommand{\calH}{\mathcal{H}}
\newcommand{\calL}{\mathcal{L}}
\newcommand{\eins}{{ 1}}
\newcommand{\Var}{{\mbox{Var\,}}}
\newcommand{\diag}{{\mbox{diag\,}}}
\newcommand{\argmin}{ \operatorname{argmin} }
\newcommand{\id}{\operatorname{id}}
\newcommand{\cadlag}{c\`adl\`ag }
\begin{document}

\title{\Large Sequential Cross-Validated Bandwidth Selection Under Dependence and Anscombe-Type Extensions
to Random Time Horizons}

\date{}

\maketitle


\author{
\begin{center}
\vskip -1cm

\textbf{\large Ansgar \ Steland}

Institute of Statistics, RWTH Aachen University, \\
W\"ullnerstr.3, 52065 Aachen, Germany

\end{center}
}

{\small \noindent\textbf{Abstract:} To detect changes in the mean of a time series, one may use previsible detection procedures based on
nonparametric kernel prediction smoothers which cover various classic detection statistics as special cases. 
Bandwidth selection, particularly in a data-adaptive way, is a serious issue and not well studied for detection problems. 
To ensure data adaptation, we select the bandwidth by cross-validation, but in a sequential way leading to a 
functional estimation approach. This article provides the asymptotic theory for the method under fairly weak assumptions 
on the dependence structure of the error terms, which cover, e.g., GARCH($p,q$) processes, by establishing (sequential) functional central limit theorems for the cross-validation objective function and the associated bandwidth selector. It turns out that the proof can be based in a neat way on \cite{KurtzProtter1996}'s results on the weak convergence of \ito integrals and a diagonal argument.

Our gradual change-point model covers multiple change-points in that it allows for a nonlinear regression function after the first 
change-point possibly with further jumps and Lipschitz continuous between those discontinuities. 

In applications, the time horizon where monitoring stops latest is often determined by a random experiment, e.g.
a first-exit stopping time applied to a cumulated cost process or a risk measure, possibly stochastically dependent from the monitored
time series. Thus, we also study that case and establish related limit theorems in the spirit of \citet{Anscombe1952}'s result.
This is achieved by embedding  the stopped processes into a sequence of processes, which allows us to handle the randomly determined
time horizon as a random change of time problem. The result has various applications including statistical parameter estimation and monitoring financial investment strategies with risk-controlled early termination, which are briefly discussed.
}
\\ \\
{\noindent\textbf{Keywords:} Empirical process; Changepoint detection; Multiple jumps; Nonparametric regression; Risk control; Stochastic integration; Time series.}
\\ \\
{\noindent\textbf{Subject Classifications:} 60G10; 62G08; 62L12.}

\section{INTRODUCTION} \label{s:Intro}

Assuming a nonparametric regression model with non-vanishing mean, we study the sequential asymptotic distribution theory of the cross-validated
bandwidth selector for a previsible  kernel detection statistic.
The nonparametric regression model assumes that the mean of a process observed in discrete time is given by an unknown function belonging to
some infinite dimensional function space, providing an attractive framework for statistical estimation as well as detection problems. Estimation and inference based
on an observed (large) sample has been extensively studied in the literature. For an overview on this topic and the most common methods 
 such as kernel estimators, local polynomials, smoothing splines and wavelets, we refer to \citet{DonohoJohnston1994}, \cite{Eubank1988}, 
\cite{HaerdleAppliedNonpR1991} and \citet{WandJones1995} and the references given therein.

However, often the data arrive sequentially and interest is in detecting changes in the mean function, for instance that the mean is too large or too small. There is growing interest in sequential methods due to their importance for the analysis of data streams in areas such as finance, environemetrics and engineering.
Procedures based on nonparametric smoothers form an attractive class of methods, which has received substantial interest in the literature; we refer to
\citet{WuChu93}, \citet{MuellerStadtmueller99}, \citet{Steland2005JSPI} and \citet{Steland2010JNonpStat}, amongst others. Now invariance principles provide a neat
way to obtain distributional approximations under weak assumptions. For a discussion of that approach we refer to \citet{Steland2010c}.
There is also a rich literature on the estimation of regression functions that are smooth except some discontinuity (change-) points. See, for example, the recent work of \citet{GijbelsGoderniaux2004} or \citet{AntochGregoireHuskova2007}.

The problem how to select the bandwidth or, more generally, tuning constants, is a serious issue, which is not well understood for the sequential detection problem. Here,
asymptotic results for methods such as the CUSUM, MOSUM or EWMA procedures, studied by \citet{HanEtAl2004}, \citet{BrodskyDarkhovsky2008}, \citet{AueHorvathEtAl2008}, and \citet{Moustakides2008}  amongst many others, assume that the bandwidth parameter may depend on some size parameter, usually the time horizon or maximal sample size, but it is chosen in a non-stochastic (deterministic) way; a notable exception is the work of \citet{Spokoiny1998} and \citet{SpokoinyPolzehl2006}.
For a novel consistent approach to the selection of such tuning constants using singular value decomposition methods see
\citet{GPS2012}.
In  \citet{Steland2010a} we  proposed to use cross-validation, a general technique widely employed,  in a sequential way to a sequence of kernel prediction statistics related to the well known Nadaraya-Watson estimator. Using a sequence of prediction statistics forming a previsible process and being close to the Nadaraya-Watson estimator has the advantage that detection is based on a control statistic which can be interpreted as a one-step ahead predictor providing sequential approximations to the process mean. Contrary, various classic methods for change detection lack that nice interpretation.

In the sequential approach, the cross-validated bandwidth is, in principle, calculated at each time point thus yielding a functional bandwidth estimate. \citet{Steland2010a}
establishes uniform laws of  large numbers for the corresponding objective function as well as consistency theorems for the bandwidth estimate.
To guarantee wide applicability, those results are proved for i.i.d. data as well as for $L_2$-NED processes. 
The present article focuses on the relevant asymptotic distribution theory in the sense of weak convergence. We provide sequential functional central limit theorems for the cross-validation criterion as
well as for the functional estimate of the cross-validated bandwidth. An important tool for our theoretical results is a general result on the weak convergence of It\^{o} integrals 
for integrators which are semimartingales. We show that the results hold true under a weak $\alpha$-mixing condition which is satisfied by many processes, e.g. many linear processes, which are also known to be $S$-mixing, a class of processes for which \citet{BerkesHormannEtAl2009} recently established a strong invariance principles for the classic sequential empirical process. For related results for long-memory processes we refer to  \citet{DehlingTaqqu1989} and the work of
\citet{DoukhanEtAl2005}  which allows for a weakly dependent nonlinear Bernoulli shift component.
Further results can be found in \citet{DehlingMikosch2002}.

In a first step, we assume that the observations arrive sequentially until a non-random time horizon $ T \to \infty $ is reached.
By rescaling time to the unit interval, the Skorohod spaces of right-continuous function with left-hand limits provide an appropriate
framework to establish a weak limit theory. However, in certain applications the time horizon is not fixed but determined 
by a parameterized random experiment such as a family of random first exit stopping times. The
question arises under which conditions on those stopping times the stopped process inherits the asymptotic distribution. 
Results of this type can be traced back to the
seminal work of \citet{Anscombe1952}, which studied the large-sample theory of randomly stopped 
stochastic processes in discrete time. Thus, in a second step, we show that in our framework an embedding argument allows us to interpret
the randomly selected time horizon as a random change of time problem leading to a Anscombe-type theorem. We assume the same
condition on the family of random indices replacing the time horizon $T$ as imposed by Anscombe.

The sequential setup is as follows:
We assume that observations $ Y_n = Y_{Tn} $, $ 1 \le n \le T $, arrive sequentially until the maximum sample size $ T $ is reached and satisfy the model equation
\begin{equation}
\label{ModelPaper}
  Y_n = m(x_n) + \epsilon_n, \qquad n = 1, 2, \ldots, T, \ T \ge 1,
\end{equation}
with
\begin{equation}
\label{ModelForM}
  m(x_n) = m_0(x_n) + \delta(x_n)/\sqrt{T}.
\end{equation}
The time horizon $T$ is assumed to be non-random and large; it will converge to $ \infty $ in our limit theorems. Extensions to
random time horizons are discussed in Section~\ref{Sec:Anscombe}.
The function $ m_0 $ is assumed to be known. $ \delta $ is a bounded and piecewise Lipschitz continuous function on $[0,\infty)$ with at most finitely many jumps, either $ \delta>0 $ or $ \delta<0$, and such that 
 \[ q_1 = \inf \{ s > 0 : \delta(s) \not= 0 \} > \gamma \] 
 for some $ \gamma \in (0,1) $. In detection, the primary goal is to detect changes from an assumed model, $m_0$, for the process, also called {\em in-control model} or {\em null model}. The departure from that in-control or normal behavior is modeled by the function $ \delta $. Of particular interest is
 the detection of the first change point $ q_1 $. When $ \delta $ is a smooth function, the above model is also called gradual change model, since then the process mean smoothly drifts away from the assumed in-control behavior. But because we allow for $ \delta $-functions with jumps, (\ref{ModelForM}) is very general and covers the case that there are many change-points where the mean changes abruptly, e.g. when  $ \delta $ is a step function representing a finite number of level shifts. Hence, we treat a large class of change-points models in an unified way. We consider a sequence of local alternatives converging to the null model at the rate $ T^{-1/2} $, which will allow us to establish weak limits for the quantities of interest providing a means to study local performance properties, e.g. by simulating from the limit process. Of substantial interest is the detection of the first change-point of $ \delta $ after some initial time instance $s_0 $ where the monitoring procedure starts, i.e. $ \inf \{ s > s_0 : \delta(s) > 0 \} $, respectively.

For the regressors $ \{ x_n \} $ a fixed design
\[
  x_n = x_{Tn} = G^{-1}( n/T ), \qquad 1 \le n \le T,
\]
induced by some design distribution function $ G $ is assumed. In many applications one can assume that $G$ is known or chosen by the statistician. Examples cover biostatistical dose-response studies, applications in communication engineering with equidistant sampling as well as laboratory experiments where the design points are selected according to some external
criterion, cf. also \citet{Steland2010a}. For simplicity of our exposition, we will assume that $ G = \id $, since  otherwise one may substitute $m_0 $ by $ m_0 \circ G^{-1} $ and $ \delta $ by $ \delta \circ G^{-1} $. The term $ \delta_T = \delta/\sqrt{T} $ in (\ref{ModelPaper}) represents the local alternative model describing the departure from $m_0$.

Our results work under the weak assumption that the errors $ \{ \epsilon_t \} $ form a strictly stationary martingale difference sequence satisfying a classic condition on the strong mixing coefficients. Some of our results even hold true for stationary martingale difference sequences without additional assumptions.

The organization of the paper is as follows. In Section~\ref{Sec:SeqCrossVal}, we introduce the sequential cross-validation approach. Section~\ref{sec:Preliminaries} provides some basic notation and preliminaries as well as an exposition of a result on the weak convergence of \ito integrals, which we shall use to prove the results. The main asymptotic results are given and proved in Section~\ref{Sec:Asymptotics}.
Section~\ref{Sec:Anscombe} discusses the extension to random time horizons, its relationship to Anscombe's classical result and our
Anscombe-type result based on a random change of time argument.

\section{SEQUENTIAL CROSS-VALIDATION}
\label{Sec:SeqCrossVal}

The statistical idea of cross-validation is to choose nuisance parameters such as tuning constants controlling the degree of smoothing of a statistic in a data-adaptive way such that the corresponding estimates provide a good fit on average. Let us define the sequential, i.e.,
$ \calF_{i-1} = \sigma( Y_j: 1 \le j \le i-1 ) $-measureable prediction estimate
\begin{equation}
\label{DefWHat}
  \widehat{m}_{h,-i} = N_{T,-i}^{-1} \frac{1}{ h } \sum_{j=\trunc{T\gamma}}^{i-1} K( [j-i]/h ) Y_j, \qquad i = \trunc{T\gamma}, \trunc{T\gamma}+1, \ldots
\end{equation}
where $ N_{T,-i} = h^{-1} \sum_{j=\trunc{T\gamma}}^{i-1} K((j-i)/h) $ and $ \gamma \in (0,1) $ is an arbitrary small but fixed constant. 
$K$ is a kernel function
such that
\begin{equation}
\label{ST:AssumptionsK}
 \text{$K \in $ Lip($[0,\infty); [0,\infty)$), $ \| K \|_\infty < \infty $  and $K>0$,} 
\end{equation}
where $ \text{Lip($[0,\infty);\R$)} $ denotes the class of Lipschitz continuous functions on $ [0,\infty) $. We assume that the bandwidth $h > 0$ is a function of the time horizon $T$
in such a way that 
\begin{equation}
\label{ST:Bandwidth}
  |T/h - \xi | = O( 1/T ) 
\end{equation}
for some constant $ \xi \in ( 0, \infty ) $. Imposing the convergence rate $ T^{-1} $ rules out artificial choices such as $ h = T/(\xi + T^{-\gamma}) $, $ \gamma > 0 $, leading to arbitrary slow convergence. 

To this end, let $ \calF_n $ be the natural filtration associated to $ \{ \epsilon_n \} $. Substituting $ h $  in  $ \widehat{m}_{h,-i} $ by a row-wise $ \calF_i $-adapted array 
$ h_{Ti}^* $, $ \trunc{T s_0} \le i \le T $, $ T \ge 1 $, of non-negative random variables yields again an adapted array $ \{ \widehat{m}_{h_{Ti}^*,-i} \} $ to which
we apply one-sided detection procedures given by the first exit stopping times 
\begin{align}
\label{STplus}
  S_T^+ &= \inf \{ \trunc{s_0 T} \le i \le T : \wh{m}_{h^*_{Ti},-i} > c \},\\
\label{STminus}
  S_T^-  &= \inf \{ \trunc{s_0 T} \le i \le T : \wh{m}_{h^*_{Ti},-i} < c \},
\end{align}
respectively, where $ s_0 > \gamma $ determines the start of monitoring. 
Given the predictions $ \widehat{m}_{h,-i}  $, we may define the {\em sequential leave-one-out cross-validation criterion} 
\[
  CV_s(h) = CV_{T,s}(h) = \frac{1}{T} \sum_{i=\trunc{Ts_0}}^{\trunc{Ts}} ( Y_i - \widehat{m}_{h,-i} )^2, \qquad h > 0,
\]
a function of the candidate bandwidth $h$.
In the functional cross-validation bandwidth approach the cross-validation objective function is minimized for each $ s \in [s_0,1] $. To do so,
let $ \calH_{s_0,\xi} $ be the family of all arrays $ \{ h_{Tn} : \trunc{s_0T} \le n \le T, \ T \ge 1 \} $ with
\[
 \max_{1 \le n \le T} |T / h_{Tn} - \xi | = O(1/T) \qquad \text{for some $ \xi > 0 $}.
\]
We consider minimizers $ \{ h_{Tn}^* \} \in \calH_{s_0,\xi} $ of the cross-validation criterion such that 
\[
  CV_{n/T}(h_{Tn}^*) \le CV_{n/T}( h_{Tn} ), \qquad \trunc{s_0T} \le n \le T, \ T \ge 1, 
\]
for all $ \{ h_{Tn} \} \in \calH_{s_0,\xi} $. This leads to the functional cross-validated bandwidth estimator
\[
  h^*_T(s) = h_{T,\trunc{Ts}}^*, \qquad s \in [s_0,1].
\]
Notice that, by definition, $ \{ h^*_T(s) : s \in [s_0,1] \} $ is $ \calF_{\trunc{Ts}} $-adapted. 
\citet{Steland2010a} showed that, under regularity assumptions, $ CV_{T,s}(h) $ converges to some function $ CV_\xi(s) $ which depends on $ \xi = \lim T/h $. That result is valid for stationary $ \alpha $-mixing series and for $L_2$-near epoch dependent time series. For i.i.d. error terms one even achieves the usual $ O(1/\sqrt{T}) $ rate of convergence in the sense of $ L_2 $ convergence. Having those results in mind, we now address the related weak convergence theory.

Since in practice the cross-validation criterion has to be minimized numerically, one may assume that minimization is done over a finite grid of values, and
we shall provide a weak convergence result for the cross-validated bandwidth under such an assumption.
Further, conducting cross-validation at each time point can be infeasible in a practical application, such that one has to select $N$ time points, $ s_0 \le s_1 < \cdots < s_N $, 
where the cross validation criterion is numerically minimized, thus yielding an adapted sequence
$ h_i^* = h_T^*(s_i) $, $i = 1, \dots, N $. The cross-validated bandwidth $ h_i^* $ is then used during the time interval $ [s_i,s_{i+1}) $, $ i = 1, \dots, N $.
Clearly, the corresponding cross-validation bandwidth estimator is now the step function
\[
  h^*_{TN}(s) = h_T^*(s_i), \qquad s \in [s_i,s_{i+1}), \ i = 1, \dots, N-1.
\]
In such a situation, it is sufficient to know the convergence of the finite dimensional distributions (fidi convergence).

\section{PRELIMINARIES AND WEAK CONVERGENCE OF STOCHASTIC INTEGRALS}
\label{sec:Preliminaries}

Since $ L_T $ and $ Q_T $ are random \cadlag functions, it is in order to recall some basic facts on the Skorohod spaces $ D([a,b];\R^l) $, $l$ an
integer, consisting of those functions $ [a,b] \to \R^l $, $ a, b \in \R $, being right-continuous with existing limits from the left. Let
$ V(f) $ denote the (total) variation semi-norm of a function $f$ and $ \| f \|_\infty $ its supnorm. 
For a random variable $ X $ we denote by $ \| X \|_p $ the $L_p $-norm, $ p \in [0,\infty) $. The space $ D([a,b] ; \R^l ) $ can be equipped with the following
Skorohod metric. For two functions $ f, g $ on $ [a,b] $ with values in $ \R^l $ define
\[
  d( f, g ) = \inf_{ \lambda \in \Lambda } \max \{ \| f \circ \lambda - g \|_\infty, \| \lambda - \id \|_\infty \},
\]
where $ \Lambda $ is the set of all strictly increasing continuous mappings $ \lambda : [a,b] \to [a,b] $.  
Clearly, $ d(f,g) \le \| f - g \|_\infty $, such that uniform convergence
implies convergence in the Skorohod metric. Weak convergence of a sequence $ \{ X, X_n \} $ of random functions taking values in $ D([a,b];\R^l ) $ 
now means weak convergence of the measures $ P_{X_n} $ to $ P_X $, as $ n \to \infty $, denoted by $ X_n \Rightarrow X $,
$n \to \infty $. For the sake of clarity of exposition, we shall also write $ X_n(u) \Rightarrow X(u) $, as $ n \to \infty $.
Further details can be found in \citet{BickelWichura71}, \citet{Neuhaus71}, \citet{Straf72} and \citet{SeijoSen2011}.

The framework for the weak convergence result for It\^o integrals is as follows. Let us first recall the definition of the It\^{o} integral, cf.
\citet{Protter05} or \citet{Steland2012Book}. Let $ \{ H_n \} $ and $ \{ X_n \} $ be sequences
of adapted processes on a probability space $ (\Omega, \calF, P)$ which is equiped with a sequence $ \{ \calF_n \} $ of filtrations
$ \calF_n = \{ \calF_{nt} : t \in I  \}  $  with index set $I$, i.e. $ H_n, X_n $ are $ \calF_{nt} $-adapted such that
$ H_n(t), X_n(t) $ are $ \calF_{nt} $-measureable, $ t \in I $. In general, a process $ X $ is called a semimartingale, if $ X = M + A $ for some local martingale and a
process $A $ having bounded variation. Given a semimartingale $X$ and a predictable \cadlag process $H$, one may define the stochastic It\^o integral 
\[ 
  \int H \, d X = \left\{ \int_0^t H(s-) \, d X(s) : t \in I \right\}.
\]
When we equip the space $L(I;\R) $ of left continuous functions possessing right-hand limits with the topology induced by the uniform convergence on
compact sets, the linear operator $ I(\cdot) = \int \cdot \, d X $ is continuous on $L(I;\R)$, such that uniform convergence $ H_n \to H $ on compact sets
of a sequence $ \{ H, H_n \} $ of such adapted processes implies convergence of the It\^{o} integral, in probability, and therefore also weakly.
The following result extends the latter fact to the much more involved case that the integrator depends on $n$.

\begin{theorem} (Kurtz and Protter, 1996)\\
\label{KurtzProtter}
Suppose that $ X_n $ is, for each $n \in \N$, a $ \calF_{nt} $-adapted semimartingale with Doob decomposition $ X_n = M_n + A_n $ such that $ \sup_n \Var(X_n) + V(A_n)< \infty $, and $ H_n $ is $ \calF_{nt} $-predictable. If $ (H_n,X_n) \Rightarrow (H,X) $, as $n \to \infty $, in the Skorohod space $ D([a,b];\R^2) $, then
\[
  \left( H_n, X_n, \int H_n \, d X_n \right) \Rightarrow \left( H, X, \int H \, d X \right),
\]
as $ n \to \infty $, in $ D([a,b]; \R^3) $.
\end{theorem}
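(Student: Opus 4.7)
The plan is to use the Skorohod representation theorem in $D([a,b];\R^2)$ to pass to a probability space on which $(H_n,X_n) \to (H,X)$ almost surely in the $J_1$ topology. Since the limit $X$ has at most countably many jumps almost surely, one may apply a common time-change homeomorphism to upgrade this to convergence of the paths at every continuity point of the limit. With that reduction, weak convergence of the triple reduces to convergence in probability of the stochastic integrals $\int H_n \, dX_n$ to $\int H \, dX$, which is the more tractable statement.

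Next I would decompose using the Doob decomposition $X_n = M_n + A_n$ and handle each piece separately. For the bounded variation component, $\int H_n \, dA_n$ is essentially a Riemann-Stieltjes integral; integration by parts pushes the variation onto $H_n$, which is sup-norm bounded on compacts by the tightness implicit in weak convergence, and the uniform bound $\sup_n V(A_n) < \infty$ then enables a Helly-selection style argument to conclude $\int H_n \, dA_n \to \int H \, dA$. For the martingale component, approximate $H_n$ uniformly on $[a,b]$ by simple predictable step functions $H_n^{(m)} = \sum_k H_n(t_{k-1}) \one_{(t_{k-1},t_k]}$ based on a partition $a = t_0 < \cdots < t_m = b$ chosen to avoid the jumps of the limit. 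The discretized integral $\int H_n^{(m)} \, dM_n$ is a telescoping sum of increments of $M_n$, and the joint path convergence at continuity points forces convergence of that sum to the analogous functional of $M$. The approximation error $\int (H_n - H_n^{(m)}) \, dM_n$ is controlled in $L^2$ via the \ito isometry, together with the uniform bound $\sup_n \Var(M_n) < \infty$ inherited from $\sup_n [\Var(X_n) + V(A_n)] < \infty$, times the modulus of continuity of $H$ on the partition, which vanishes as the mesh $m \to \infty$.

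A diagonal argument now completes the proof: one first sends $n \to \infty$ for each fixed $m$, then lets $m \to \infty$, and the uniform $L^2$ error bound from the previous step is precisely what permits interchanging these two limits. Joint convergence of the triple $(H_n,X_n,\int H_n \, dX_n)$ follows because each stage produces joint convergence with $(H_n,X_n)$. The main obstacle is clearly the twofold $n$-dependence: continuity of the \ito map is unavailable because the integrator itself varies with $n$, so a direct continuous-mapping argument fails. The role of the hypothesis $\sup_n [\Var(X_n) + V(A_n)] < \infty$ is to supply exactly the uniform tightness type condition that keeps the martingale $L^2$ estimate uniform in $n$, which is what legitimizes the diagonal interchange and ties the whole scheme together.
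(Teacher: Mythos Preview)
The paper does not prove this theorem. Theorem~\ref{KurtzProtter} is stated with explicit attribution to \citet{KurtzProtter1996} and is used as a black-box tool in the proofs of Theorems~\ref{ThQT} and~\ref{ThLT}; no argument for it appears anywhere in the paper. So there is nothing to compare your proposal against.

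On the merits of your sketch itself: the overall architecture (discretize the integrand, control the error uniformly in $n$, pass to the limit) is indeed the spirit of the Kurtz--Protter argument, and you correctly identify that the hypothesis $\sup_n [\Var(X_n) + V(A_n)] < \infty$ is what supplies the uniformity needed to interchange limits. However, there is a genuine technical gap in your first step. Invoking the Skorohod representation theorem gives you new versions of $(H_n,X_n)$ on a new probability space with the same laws and almost sure $J_1$-convergence, but it does \emph{not} in general preserve the filtration structure: on the new space there is no reason why the representative of $X_n$ is a semimartingale with respect to any filtration to which the representative of $H_n$ is predictable. Without that, you cannot apply the \ito isometry or any other martingale $L^2$ estimate to the represented processes, and your control of $\int (H_n - H_n^{(m)})\,dM_n$ collapses. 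Kurtz and Protter's actual proof circumvents this by working on the original spaces throughout, establishing tightness of the sequence of integrals directly from the uniform-tightness (UT) condition on the integrators, and identifying the limit via the discretization; the Skorohod representation, when used at all, enters only after the stochastic-integral structure has been reduced to a continuous functional of the paths.
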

We will apply that result to the following framework. Assume that the $ \epsilon_n $ are defined on a common probability space
$ (\Omega, \calF, P) $ which we equip with a sequence of filtrations $ \calF_{nt}  $. For simplicity, one may consider the natural filtrations
$ \calF_{nt} = \sigma( \epsilon_i : i \le \trunc{nt} ) $, $t \in [s_0,1] $, $ n \in \N $, in what follows, but the results hold true for any
 sequence of filtrations such that $ \epsilon_n $ is $ \calF_n = \calF_{n,1} $ adapted. Our assumptions on $ \delta $
ensure that $ t \mapsto  \int_{(0,t]} \delta \, d \lambda $, $ \lambda $ denoting Lebesgue measure, exists and defines a function of bounded variation.

\begin{lemma} 
\label{FCTLPartialSum}
Suppose that $ \{ \epsilon_t \} $ is a $ \calF_n $-martingale difference sequence under $P$. Then the partial sum process
\begin{equation}
\label{DefPartialSum}
S_T(u) = T^{-1/2} \sum_{i=1}^{\trunc{Tu}} Y_i, \qquad u \in [0,1], T \ge 1,
\end{equation}
defines a sequence of semimartingales. If, additionally, $ \{ \epsilon_t \} $ satisfies an invariance principle, i.e.
\[ T^{-1/2} \sum_{i=1}^{\trunc{Ts}} \epsilon_i \Rightarrow \sigma B(s), \] as $ T \to \infty $, in $ D([0,1];\R) $, for some constant $ \sigma \in (0,\infty) $
and Brownian motion $B$, then 
\begin{equation}
\label{FCLT ST}
  S_T \Rightarrow B_\delta^\sigma = \int \delta \, d \lambda + \sigma B,
\end{equation}
as $ T \to \infty $, in $ D([0,1];\R) $.
\end{lemma}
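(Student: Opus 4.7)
The strategy is to split $S_T$ into a deterministic drift and a scaled martingale part, handle each separately, and recombine via a Slutsky argument in $D([0,1];\R)$. Using the model (\ref{ModelPaper})--(\ref{ModelForM}) under the in-control convention $m_0\equiv 0$ (under which the displayed statement is literally consistent), write $S_T = D_T + M_T$ with
\[
 D_T(u) = \frac{1}{T} \sum_{i=1}^{\trunc{Tu}} \delta(i/T), \qquad M_T(u) = \frac{1}{\sqrt{T}} \sum_{i=1}^{\trunc{Tu}} \epsilon_i.
\]
For each fixed $T$, $M_T$ is a right-continuous discrete-time martingale with respect to the time-changed filtration $\calF_{T,u} := \calF_{\trunc{Tu}}$ because $\{\epsilon_i\}$ is a $\calF_n$-martingale difference sequence, while $D_T$ is deterministic and a \cadlag step function of bounded variation, its total variation being dominated by $T^{-1}\sum_{i=1}^T |\delta(i/T)| \le \|\delta\|_\infty$. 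Hence $S_T = M_T + D_T$ is the Doob decomposition into a martingale and a finite-variation process, which shows that $S_T$ is a semimartingale.

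For the drift, $D_T(u)$ is a (right-endpoint) Riemann sum for $\int_0^u \delta(s)\,ds$. Because $\delta$ is bounded and piecewise Lipschitz with only finitely many jump points $q_1,\ldots,q_k$, on each continuity subinterval the quadrature error is $O(1/T^2)$ per interval and summing over the $O(T)$ intervals gives $O(1/T)$, while the interior of each of the $k$ jump intervals contributes at most $\|\delta\|_\infty/T$. Between consecutive grid points $D_T$ is constant whereas $\int_0^\cdot \delta\,d\lambda$ varies by at most $\|\delta\|_\infty/T$. Adding these contributions,
\[
 \sup_{u\in [0,1]} \Bigl| D_T(u) - \int_0^u \delta(s)\,ds \Bigr| = O(1/T),
\]
and since $u\mapsto \int_0^u \delta\,d\lambda$ is continuous, this uniform bound implies convergence in the Skorohod topology on $D([0,1];\R)$.

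By the assumed invariance principle for $\{\epsilon_t\}$ we have $M_T \Rightarrow \sigma B$ in $D([0,1];\R)$. Coupling this with the deterministic uniform convergence of $D_T$ yields $(D_T, M_T) \Rightarrow (\int_0^{\cdot} \delta\,d\lambda,\, \sigma B)$ jointly in $D([0,1];\R^2)$ via Slutsky. The addition map $D([0,1];\R^2)\to D([0,1];\R)$ is continuous at points whose first coordinate is continuous, which is the case here, so the continuous mapping theorem gives
\[
 S_T \;=\; D_T + M_T \;\Rightarrow\; \int \delta\,d\lambda + \sigma B \;=\; B_\delta^\sigma
\]
in $D([0,1];\R)$, as asserted.

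The only delicate point to watch is the Slutsky/continuous-mapping step in the Skorohod $J_1$-topology, where addition is not globally continuous. The obstacle is removed by the continuity of $\int_0^\cdot \delta\,d\lambda$ (guaranteed by the integrability of the piecewise-Lipschitz $\delta$ with finitely many jumps): then uniform convergence of the drift combined with $J_1$-convergence of the martingale part is sufficient to conclude joint convergence and hence convergence of the sum.
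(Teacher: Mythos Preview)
Your proof is correct and follows essentially the same approach as the paper: the same martingale-plus-drift decomposition $S_T = M_T + D_T$, the same bounded-variation argument for the drift, and the same $O(1/T)$ uniform approximation of $D_T$ by $\int_0^\cdot \delta\,d\lambda$. The only minor differences are that the paper quotes Koksma's theorem to get the bound $|D_T(u)-\int_0^u \delta| \le V(\delta)/T$ in one line whereas you argue directly via the piecewise-Lipschitz Riemann-sum error, and that you spell out the Slutsky/continuous-mapping step in $D([0,1];\R)$ which the paper leaves implicit. Your remark that the displayed identity $A_T(u)=T^{-1}\sum \delta(i/T)$ presupposes the in-control convention $m_0\equiv 0$ is also well taken.
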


\begin{proof}  Notice that  $ S_T $ attains the decomposition $ S_T(u) = T^{-1/2} \sum_{i=1}^{\trunc{Tu}} \epsilon_i + A_T(u) $, 
where the first term is a martingale and $A_T(u) = T^{-1/2} \sum_{i=1}^{\trunc{Tu}} E(Y_i) = T^{-1} \sum_{i=1}^{\trunc{Tu}} \delta(i/T) $ is non-random. But due to Koksma's theorem, 
\[
  \left| A_T(u) - \int_0^u \delta(z) \, dz \right| \le V( \delta ) T^{-1},
\]
where the upper bound is independent from $u$.  The variation of the step function $ A_T(u) $ is $ T^{-1} \sum_{i=1}^T |\delta(i/T)| $, which
converges if $ \delta $ is piecewise Lipschitz with a finite number of finite jumps, and is therefore bounded in $T \ge 1 $. Hence, $ S_T $ is a semimartingale.
\end{proof}

We shall impose mixing conditions on the innovation process $ \{ \epsilon_t : t \in \Z \} $, which is assumed to be indexed by 
the integers. Recall that $ \{ \epsilon_t \} $ is called
$ \alpha $-mixing, if $ \alpha(k) = o(1) $, as $ k \to \infty $, where for $ k \in \N_0 $
\[
  \alpha(k) = \sup_{A \in \calF_{-\infty}^0, B \in \calF_{k}^\infty} | P(A \cap B ) - P(A) P(B) |
\]
denotes the $ \alpha $-mixing coefficient and $ \calF_a^b = \sigma( \epsilon_t : a \le t \le b ) $ for $ -\infty \le a \le b \le \infty $. 
$ \{ \epsilon_t \} $ is called $ \phi $-mixing, if $ \phi(k) = o(1) $, as $ k \to \infty $, where
\[
  \phi(k) = \sup_{A \in \calF_{-\infty}^0, B \in \calF_{k}^\infty} | P(A | B ) - P(A) |
\]
One can check that $ \alpha(k) \le \phi(k) $, see \citet{Doukhan1994} or \citet{AthreyaLahiri2006}.

\section{ASYMPTOTIC THEORY FOR SEQUENTIAL CROSS-VALIDATION}
\label{Sec:Asymptotics}

We shall now study the weak convergence theory of the sequential cross-validation bandwidth procedure.

Let us first identify the random processes which we have to investigate. Notice that for any $ s \in [s_0,1] $ and $ h > 0 $ we have
\[
  CV_s(h) = \frac{1}{T} \sum_{i=\trunc{Ts_0}}^{\trunc{Ts}} Y_i^2 - \frac{2}{T} \sum_{i=\trunc{Ts_0}}^{\trunc{Ts}} Y_i \widehat{m}_{h,-i}
    + \frac{1}{T} \sum_{i=\trunc{Ts_0}}^{\trunc{Ts}} \widehat{m}_{h,-i}^2,
\]
such that minimizing $ CV_s(h) $ is equivalent to minimizing the random function
\[
  C_{T,s}(h) = L_T(s) + Q_T(s),
\]
on which we shall focus in the sequel. Here the \cadlag processes $ \{ L_T(s) : s \in [s_0,1] \} $ and $ \{ Q_T(s) : s \in [s_0,1] \} $ are defined by 
\begin{align*}
  L_T(s)  & =  - \frac{2}{T} \sum_{i=\trunc{Ts_0}}^{\trunc{Ts}} Y_i \widehat{m}_{h,-i}, \\
  Q_T(s) &=   \frac{1}{T} \sum_{i=\trunc{Ts_0}}^{\trunc{Ts}} \widehat{m}_{h,-i}^2,
\end{align*}
for $ s \in [s_0,1] $; for our study it will be convenient to omit the $h$ in the notation. 

We shall see that $ L_T $ and $ Q_T $ have different convergence rates, $ Q_T $ being the leading term which determines the
asymptotics of $ C_{T,s}(h) $ for large $ T $. After scaling appropriately their weak limits turn out to be functionals of the
process
\begin{equation}
\label{semimart}
  B_\delta^\sigma = \int \delta \, d \lambda + \sigma B
\end{equation}
which appears as the limit of the partial sum process of the observations, $ Y_n = Y_{Tn} $, confer
Lemma~\ref{FCTLPartialSum}. Recall that $ m_0 + \delta/\sqrt{T} $ is the regression function after the (first) change-point.
Thus, the limit theorems show the effect of a general departure from the no-change model $ m_0 $ given by the function
$ \delta $, which appears as the drift in the semimartingale (\ref{semimart}).

As already mentioned in the previous section, we shall impose weak conditions on the $ \alpha $-mixing coefficients of the
innovation process $ \{ \epsilon_t \} $ of martingale differences. Indeed, those conditions are naturally satisfied by
many time series studied in the literature. As an example, consider the $ GARCH(p,q) $ model given by
\[
  \epsilon_t = \sigma_t \xi_t, \qquad \sigma^2_t = \alpha_0 + \sum_{j=1}^p \alpha_j \epsilon_{t-j}^2
   + \sum_{j=1}^q \beta_j \sigma_{t-j}^2,
\]
where $ \{ \xi_t \} $ are i.i.d.$(0,1)$ random variables, $ \alpha_p \beta_q \not= 0 $, $ \alpha > 0 $ and
$ \alpha_i, \beta_j \ge 0 $ for $ i = 1, \dots, p $ and $ j = 1, \dots, q $. It is known that a strictly stationary
GARCH$(p,q)$ process is $ \phi $-mixing with geometrically decreasing $ \phi $-mixing coefficients,
if $ \xi_1 $ attains a Lebesgue density, cf. \citet{Doukhan1994}. This implies geometrically decreasing 
$ \alpha $-mixing coefficients, which in turn implies that the conditions imposed in the results of the
present section on the $ \alpha $-mixing coefficients are  satisfied.

\subsection{The Process $ Q_T $}

Let us start our theoretical investigation with the more involved process $ Q_T $.

\begin{theorem} 
\label{ThQT}
\begin{itemize}
\item[(i)] Suppose that $ \{ \epsilon_n \} $ is a mean zero stationary martingale difference sequence which satisfies an invariance principle.
  Then, the fidi convergence of the process $ T^2 Q_T $ is given by
\[
  T^2( Q_T(s_1), \dots, Q_T(s_N) ) \Rightarrow  \diag \left(  \int_\gamma^{s_i} G^N(v) \, d B_\delta^\sigma(v)  \right)_{i=1}^N
\]
as $ T \to \infty $, where $  \int G^N(v) \, d B_\delta^\sigma(v) $ is the process 
\[\left \{  \int_\gamma^s \eins_{\{s \le s_i\}} \left( \int_\gamma^{s_i} g^{v,s_i}(u) \, d B_\delta^\sigma(u) \right)_{i=1}^N \, d B_\delta^\sigma(v) :
   s \ge \gamma \right\},
\]
for fixed time instants $ s_0 \le s_1 < \cdots < s_N \le 1 $.
Here 
\begin{equation}
\label{DefB}
  B_\delta^\sigma(u) = \int_0^u \delta(t) \, dt + \sigma B(u),
  \qquad u \in [0,1],
\end{equation}
and
\begin{equation}
\label{DefG}
  g^{v,s}(u) = \int_\gamma^s D( \xi u, \xi v, \xi w) N^{-2}( w ) \, dw, 
  \qquad u,v \in [0,w], s \in [s_0,1], 
\end{equation}
where 
\begin{align}
\label{DefD}
  D(u,v,w) &= 
  \left\lbrace 
  \begin{array}{ll}
     K(w-u) K(w-v), \qquad  & u,v, w \in [0,\infty),\ u,v \le w, \\
     0, & \text{otherwise},
   \end{array}
   \right.  \\
\label{DefN}
   N(w) & = \int_\gamma^{w}  K(  \xi (w - z) ) \, dz, \qquad w \in [\gamma,1] .
\end{align}
\item[(ii)]   Let $ \{ \epsilon_n \} $ be a strictly stationary martingale difference sequence with $ E(\epsilon_1) = 0 $, $ E(\epsilon_1^8) < \infty $ for some $ \delta > 0 $ and $\alpha $-mixing coefficients, $ \alpha(k) $, satisfying
  \[
   \sum_{k=0}^\infty [\alpha(k)]^{3/4}  < \infty \quad \text{and} \quad \sum_{k=1}^\infty k^{1+\zeta} [\alpha(k)]^{1-\zeta} < \infty,
  \]
  for some $ \zeta \in (0,1) $. Then the process $ \{ Q_T(s) : s \in [s_0,1] \} $ is tight and therefore converges weakly.
\end{itemize}
\end{theorem}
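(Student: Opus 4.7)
The plan for Part (i) is to rewrite $Q_T$ as a double It\^o integral against the partial-sum semimartingale $S_T$ of Lemma~\ref{FCTLPartialSum} and then pass to the limit by applying Theorem~\ref{KurtzProtter} twice, via a diagonal argument as suggested in the abstract. Since $Y_j=T^{1/2}\Delta S_T(j/T)$ and $|T/h-\xi|=O(1/T)$, one may replace the discrete kernel weights $K((j-i)/h)$ by $K(\xi(j/T-i/T))$ with uniformly negligible error, after which a Riemann / stochastic-integral approximation yields
\[
  \widehat{m}_{h,-i}\;=\;\frac{1}{N(i/T)\,T^{1/2}}\int_\gamma^{i/T}K\bigl(\xi(i/T-v)\bigr)\,dS_T(v)+o_P(T^{-1/2}),
\]
uniformly in $i$. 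Squaring this and exchanging the outer $i$-summation (which, after a further Riemann approximation, becomes a $dw$-integral) with the two stochastic differentials by Fubini, the pair of kernels is recognised as $D(\xi u,\xi v,\xi w)$ and the $w$-integration produces exactly the function $g^{v,s}(u)$ of (\ref{DefG}), leaving the representation
\[
  T^{2}Q_T(s)\;=\;\int_\gamma^{s}\!\int_\gamma^{s} g^{v,s}(u)\,dS_T(u)\,dS_T(v)+o_P(1).
\]

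To take the limit, fix $v$ and note that $u\mapsto g^{v,s_i}(u)$ is deterministic and Lipschitz on $[\gamma,s_i]$. Lemma~\ref{FCTLPartialSum} supplies the Doob decomposition $S_T=M_T+A_T$ with $V(A_T)$ uniformly bounded, so the hypotheses of Theorem~\ref{KurtzProtter} hold. A first application, together with the invariance principle $S_T\Rightarrow B_\delta^\sigma$, gives the joint convergence
\[
  \Bigl(S_T,\;v\mapsto\int_\gamma^{s_i}g^{v,s_i}(u)\,dS_T(u)\Bigr)\;\Rightarrow\;\Bigl(B_\delta^\sigma,\;v\mapsto\int_\gamma^{s_i}g^{v,s_i}(u)\,dB_\delta^\sigma(u)\Bigr)
\]
in $D([\gamma,s_i];\R^{2})$. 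A second application of Theorem~\ref{KurtzProtter}, now with the outer \cadlag predictable integrand $v\mapsto\int_\gamma^{s_i}g^{v,s_i}(u)\,dS_T(u)$ and integrator $S_T$, identifies the limit of the double integral as $\int_\gamma^{s_i}\int_\gamma^{s_i}g^{v,s_i}(u)\,dB_\delta^\sigma(u)\,dB_\delta^\sigma(v)$, which is the $i$-th component of the stated fidi limit. The joint convergence across $i=1,\dots,N$ is automatic since all integrals are continuous functionals of the single limit $B_\delta^\sigma$, so one can stack them into a vector.

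For Part (ii), the plan is to establish tightness through a Billingsley-type moment condition
\[
  E\bigl[(Q_T(s_2)-Q_T(s_1))^{2}(Q_T(s_3)-Q_T(s_2))^{2}\bigr]\;\le\;C\,(s_3-s_1)^{1+\eta},\qquad s_0\le s_1\le s_2\le s_3\le 1,
\]
uniformly in $T$, for some $\eta>0$. Each factor on the left is the square of a quadratic form in $\{\epsilon_j\}$ plus a smooth drift, so its expansion is of degree eight in the $\epsilon$'s, explaining the $E[\epsilon_1^{8}]<\infty$ assumption. Covariances of four-fold products at well-separated indices are controlled by Davydov's covariance inequality; the summability $\sum[\alpha(k)]^{3/4}<\infty$ then gives absolute summability of the second-order contributions, while $\sum k^{1+\zeta}[\alpha(k)]^{1-\zeta}<\infty$ supplies the polynomial decay in $(s_3-s_1)$ needed to close the moment criterion.

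The main obstacle is the diagonal argument in Part (i): one must verify that the outer integrand $v\mapsto\int_\gamma^{s_i}g^{v,s_i}(u)\,dS_T(u)$ is \cadlag and predictable for a suitable filtration, and that the joint weak convergence obtained from the first application of Theorem~\ref{KurtzProtter} is preserved when the outer integrand is itself a stochastic integral against $S_T$ rather than a fixed deterministic function. The moment bookkeeping for Part (ii) is technically lengthy but conceptually routine once Davydov's inequality and standard Rosenthal-type bounds under $\alpha$-mixing are in place.
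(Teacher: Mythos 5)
Your overall strategy coincides with the paper's: represent $T^2 Q_T$ as an iterated It\^o integral against the partial-sum semimartingale $S_T$, apply Theorem~\ref{KurtzProtter} once for the inner integral and once for the outer one, and prove tightness in (ii) via a fourth-moment bound on increments (eight-fold products of the $\epsilon$'s controlled by Yokoyama/Kim-type mixing moment inequalities) feeding into Billingsley's criterion. That part of your sketch is sound and matches the paper, including the role of $E(\epsilon_1^8)<\infty$ and the reduction to centered $Y_j$ because $E(\wh{m}_{h,-i})=O(T^{-1/2})$.

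There is, however, one genuine gap, and it sits exactly where you yourself locate ``the main obstacle.'' Your justification that the joint convergence across $i=1,\dots,N$ (and, more importantly, the joint convergence of the outer integrand together with the integrator $S_T$ needed for the second application of Theorem~\ref{KurtzProtter}) is ``automatic since all integrals are continuous functionals of the single limit $B_\delta^\sigma$'' is not correct: It\^o integrals are not pathwise continuous functionals of the integrator --- that is precisely why Theorem~\ref{KurtzProtter} is needed in the first place. The paper closes this gap as follows. First it invokes the Skorohod representation theorem to work with versions satisfying $\|S_T-B_\delta^\sigma\|_\infty\to 0$ a.s. It then forms the vector process $G_T^N(v)=\bigl(\eins_{\{v\le s_i\}}\int g_T^{v,s_i}\,dS_T\bigr)_{i=1}^N$ and shows $d(G_T^N,G^N)\to 0$ by splitting $\int g_T^{v,s_i}\,dS_T-\int g^{v,s_i}\,dB_\delta^\sigma$ into $\int(g_T^{v,s_i}-g^{v,s_i})\,dB_\delta^\sigma$ (which vanishes by continuity of the integral against the fixed semimartingale $B_\delta^\sigma$) plus $\int g_T^{v,s_i}\,d(S_T-B_\delta^\sigma)$, the latter bounded via integration by parts by $2\sup\|g_T^{v,s}\|_\infty\|S_T-B_\delta^\sigma\|_\infty+\|S_T-B_\delta^\sigma\|_\infty\sup V(g_T^{v,s})$; this is where the uniform convergence $g_T^{v,s}\to g^{v,s}$ and the \emph{uniformly bounded total variation} of the step functions $g_T^{v,s}$ (a fact you do not establish, and which replaces your inaccurate claim that $g^{v,s_i}$ is Lipschitz --- $D$ is discontinuous on the set $u=w$ or $v=w$) are indispensable. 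Only after this does the second application of Theorem~\ref{KurtzProtter} deliver the fidi limit by sampling the resulting process at $s_1,\dots,s_N$ and taking the diagonal. Relatedly, your claim that the inner integrand can be replaced by the limiting $g^{v,s}$ at cost $o_P(T^{-1/2})$ uniformly needs the same integration-by-parts control, since $S_T$ does not have uniformly bounded variation; the paper avoids this by carrying $g_T^{v,s}$ through and proving joint weak convergence of the pair $(g_T^{v,s},S_T)$.
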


\begin{proof} Denote by $ S_T $ the partial sum process introduced in Lemma \ref{FCTLPartialSum}.
Either by the assumption stated in (i) or under the  moment and mixing conditions imposed in (ii), we have the weak convergence
\[
  S_T(u) \Rightarrow B_\delta^\sigma(u) = \int_0^u \delta(t) \, dt + \sigma B(u), 
\]
as $ T \to \infty $, since we may apply \citet[Corollary 1]{Herrndorf1984} with $ \beta = 4 $ under condition (ii). Indeed, the conditions on the mixing coefficients are
stronger than required there and $ E(\sum_{i=1}^n \epsilon_i)^2/n = E\epsilon_1^2 < \infty $ holds true for any strictly stationary martingale difference sequence
$ \{ \epsilon_t \} $.
We shall now apply the Skorohod representation theorem which asserts that on a new probability space 
equivalent versions of the processes $ \{ S_T(u) : u \in [s_0,1] \} $ and $ \{ B_\delta^\sigma(u) : u \in [s_0,1] \} $ can be defined,
which we will again denote by $ S_T $ and $ B_\delta^\sigma$, such that
\[
  \| S_T - B_\delta^\sigma \|_\infty \to 0, \qquad a.s.,
\]
as $ T \to \infty $. Let us consider the quadratic form $ Q_T(s) $. Notice that
\begin{align*}
  Q_T(s) & = \frac{1}{T^3} \sum_{i=\trunc{Ts_0}}^{\trunc{Ts}} 
  \frac{ \sum_{j,k=\trunc{T\gamma}, j \not= k}^{i-1} K(i/h-j/h)K(i/h-k/h) Y_j Y_k }
  { \left( \frac{1}{T} \sum_{j=\trunc{T\gamma}}^{i-1} K( [i-j]/h ) \right)^2 } \\
  & =\frac{1}{T^2} \sum_{i=\trunc{Ts_0}}^{\trunc{Ts}}  \sum_{j,k=\trunc{T\gamma}}^{\trunc{Ts}} 
      \frac{ D(j/h, k/h, i/h ) } { \left( \frac{1}{T} \sum_{j'=1}^{i-1} K( [i-j']/h ) \right)^2  } \frac{Y_j}{\sqrt{T}} \frac{Y_k}{\sqrt{T}}, 
\end{align*}
where the function $ D : [0,\infty)^3 \to [0,\infty) $ is defined in (\ref{DefD}).
Using the fact that
\[
  \frac{1}{T} \sum_{i=\trunc{Ts_0}}^{\trunc{Ts}-1} K( i/h - j/h ) K( i/h - k/h ) 
    = \int_{\trunc{Ts_0}/T}^{\trunc{Ts}/T-1/T}  K( \trunc{Tx}/h - j/h ) K( \trunc{Tx}/h - k/h ) \, dx,
\]
we may represent $ T^2 Q_T(s) $ via (It\^o) integrals, namely
\begin{align*}
 & T^2 Q_T(s) \\
 & \qquad =  \int_{\trunc{Ts_0}/T}^{\trunc{Ts}/T} \int_{\trunc{T\gamma}/T}^{\trunc{Ts}/T} \int_{\trunc{T\gamma}/T}^{\trunc{Ts}/T} D( uT/h, vT/h,\trunc{Tw}/h ) N_T^{-2}( w )  \, d S_T(u) \, d S_T(v) \, dw \\
   & \qquad =  
   \int_{\trunc{T\gamma}/T}^{\trunc{Ts}/T} \int_{\trunc{T\gamma}/T}^{\trunc{Ts}/T} \int_{\trunc{Ts_0}/T}^{\trunc{Ts}/T} D( uT/h, vT/h,\trunc{Tw}/h) N_T^{-2}( w )  \, dw \, d S_T(u) \, d S_T(v),
\end{align*}
where
\begin{equation}
\label{DefNT}
  N_T( w ) = \frac{1}{T} \sum_{j'=\trunc{T\gamma}}^{ \trunc{Tw}-1} K( \trunc{Tw}/h - j'/h )
  = \int_{\trunc{T\gamma}/T}^{ \trunc{Tw}/T-1/T}  K( \trunc{Tw}/h - \trunc{Tz}/h ) \, dz, \qquad w \ge 0.
\end{equation}
The first step will be to apply Theorem \ref{KurtzProtter} to obtain weak convergence of the inner It\^o integral. The second step, a diagonal argument, will
then yield the fidi convergence. Lastly, we verify tightness under the conditions given in (ii).
Clearly, we expect that $ N_T(w) $ converges to the function $ N(w) = \int_\gamma^{w}  K(  \xi (w - z) ) \, dz $, $ w \in [s_0,1]$. 
Since for $ w \ge \gamma $ we have $ N(w) \ge N(\gamma) > 0 $ and, of course,
\begin{equation}
\label{UnifKonvN}
  \sup_{ w \in [\trunc{T\gamma},1]} | N_T(w) - N(w) | \le V(K) T^{-1}
\end{equation}
by virtue of Koksma's theorem, yielding $ | N_T^{-1}(w) - N^{-1}(w) | \to 0 $, as $ T \to \infty $, uniformly in $ w \in [\gamma,1] $.
Fix $ s \ge s_0 $ and $ v $. Define for $ u \in [0,1] $
\begin{align*}
  g_T^{v,s}(u) & = \int_{\trunc{T\gamma}/T}^{\trunc{Ts}/T} D( u T/h, v T/h,\trunc{Tw}/h ) N_T^{-2}( w ) \, dw, \\
  g^{v,s}(u) & = \int_\gamma^s D( \xi u, \xi v, \xi w) N^{-2}( w ) \, dw.
\end{align*}
For what follows, we need to verify that $ g_T^{v,s} \to g^{v,s} $ in the uniform topology, and that $ g_T^{v,s} $ has
uniformly bounded variation. Clearly, $ | g_T^{v,s}(u) - g^{v,s}(u) | $ can be bounded by
\[
  O(T^{-1}) + 
  \int_{\trunc{T\gamma}/T}^{\trunc{Ts}/T} | D( u T/h, v T/h, \trunc{Tw}/h ) N_T^{-2}( w ) - D(\xi u,\xi v,\xi w)N^{-2}( w ) | \, dw.
\]
Let $ A_T = \{ (u,v,w) : \trunc{T\gamma} \le u,v, \trunc{Tw}/h \le \trunc{Ts}/T  \} $. On the set $ A_T $ the above integrand equals
$ | K( \trunc{Tw}/h - u T/h ) K( \trunc{Tw}/h - v T/h )  N_T^{-2}(w) - K(\xi(w-u)) K(\xi(w-v)) N^{-2}(w) | $. Recall the fact that
for sequences of mappings $ \{ a, a_T \} $, $ \{ b, b_T\} $ taking values in some normed space with norm $ \| \cdot \| $, 
we have $ a_T b_T \to fg $, as $ T \to \infty $, provided $ a_T \to a, b_T \to b $ and $ \| a \|, \sup_{T \ge 1} \| b_T \| < \infty $. 
Apply that result with $ a_T(u,v,w) = K( \trunc{Tw}/h - u T/h ) K( \trunc{Tw}/h - v T/h ) $, $ a(u,v,w) = K( \xi(w-u) ) K( \xi(w-v) ) $, 
$ b_T(w) = N^{-2}(w) $ and $ b(w) = N^{-2}(w) $.
By boundedness and Lipschitz continuity of $K$ and due to  (\ref{UnifKonvN}) we may conclude that 
\begin{equation}
\label{UnifConvGT}
  \sup_{u,v \in [\gamma,1], w \ge s_0} | g_T^{v,s}(u) - g^{v,s}(u) | \to 0,
\end{equation}
as $ T \to \infty $; that convergence is even uniform in $ s \in [s_0,1] $. Before proceeding, let us check that $ g_T^{v,s} $ is of uniformly bounded
variation, such that the uniform limit $ g^{v,s} $ is of bounded variation as well. Clearly, $ g_T^{s,v} $ is a step function with
jumps at $ k/T $, $ k = \trunc{T \gamma}/T, \dots, \trunc{Ts}-1 $, of size not larger than $ T^{-1} \| K \|_\infty^2 / N(\gamma)^2 $ in absolute value. Thus, for any
partition $ \{ \xi_i \} $, arbitrary $ s \in [s_0,1] $ and $ v \le w $, the variation $ \sum_i \left| g_T^{v,s}(\xi_{i+1}) - g_T^{v,s}( \xi_i ) \right| $ can be
bounded by $ \| K \|_\infty^2 /N(\gamma)^2 $, yielding
\begin{equation}
\label{UnifTVGT}
\sup_{s\in[s_0,1], \gamma \le v \le w} \sup_{T \ge 1} V( g_T^{v,s} ) < \infty.
\end{equation}
By (\ref{UnifConvGT}), we may conclude (take $ \lambda = \text{id} $) that, for fixed $ v,s $,
\begin{equation}
\label{SM0}
  \inf_{\lambda \in \Lambda} \max \left \{
    \left\| \sqrt{   (S_T \circ \lambda( \cdot ) - B_\delta^\sigma \circ \lambda( \cdot )  )^2 
                       + (g_T^{v,s} \circ \lambda( \cdot ) - g^{v,s} \circ \lambda(\cdot) )^2 } \right\|_\infty ,
    \| \lambda - \id \|_\infty 
    \right\} = o( 1 ),
\end{equation}
as $ T \to \infty $, a.s., where the $ o(1) $ is even uniform in $ u,v $ and $ \| \cdot \|_\infty $ denotes the supnorm over $[\gamma,1] $.  This means, $ d((g_T^{v,s}, S_T ),(g^{v,s},B_\delta^\sigma) ) \to 0 $,
as $ T \to \infty $, a.s.,  which, of course, implies weak convergence by virtue of the second half of the Skorohod/Dudly/Wichura
representation theorem, i.e.
\[
  ( g_T^{v,s}, S_T ) \Rightarrow ( g^{v,s}, B_\delta^\sigma ),
\]
as $ T \to \infty $, in the Skorohod space $ D( [\gamma,1]; \R^2 ) $. We may apply Theorem \ref{KurtzProtter} to conclude that
\[
  \left( g_T^{v,s}, S_T, W_T^{v,s}\right) \Rightarrow \left( g^{v,s},  B_\delta^\sigma, W^{v,s} \right),
\]
in $ D([\gamma,1]; \R^3) $, as $ T \to \infty $, for the equivalent versions, where the processes $ \{ W^{v,s}(t) : t \in [\gamma,1] \}, \{ W_T^{v,s}(t) : t \in [\gamma,1] \} $, $ T \ge 1 $,
are defined by
\begin{align*}
  W_T^{v,s} & =  \int g_T^{v,s}(u) \, d S_T(u), \\
  W^{v,s} & =  \int g^{v,s}(u) \, d B_\delta^\sigma(u).
\end{align*}
The second step is a {\em diagonal argument}: Fix $ N \in \N $ and points $ s_1, \dots, s_N \in [s_0,1] $ with $ s_1 < \dots < s_N $. Put for $ T \ge 1 $
\begin{align*}
  G_T^N(v) &= \left( \eins_{\{ v \le s_1 \} } \int_{\trunc{T\gamma}/T}^{s_1} g_T^{v,s_1}(u) \, d S_T(u), \dots, \eins_{ \{ v \le s_N \} }  \int_{\trunc{T\gamma}/T}^{s_N} g_T^{v,s_N}(u) \, d S_T(u) \right), \\
  G^N(v) & = \left( \eins_{\{ v \le s_1 \} ‚}  \int_{\gamma}^{s_1} g^{v,s_1}(u) \, d B_\delta^\sigma(u), \dots, \eins_{\{ v \le s_N \} }  \int_{\gamma}^{s_N} g^{v,s_N}(u) \, d B_\delta^\sigma(u)  \right).
\end{align*}
Let us check that  $ d( G_T, G ) = o(1) $, as $ T \to \infty $, where $d$ denotes the Skorohod metric on $ D([s_0,1];\R^N ) $. Consider for $ i =1, \dots, N$,
\[
  \int_{\trunc{T\gamma}/T}^{s_i} g_T^{v,s_i} \, d S_T - \int_{\gamma}^{s_i} g^{v,s_i} \, d B_\delta^\sigma 
  = O(T^{-1}) +  \int_\gamma^{s_i} (g_T^{v,s_i} - g^{v,s_i} ) \, d B_\delta^\sigma
  + \int_{\gamma}^{s_i} g_T^{v,s_i} d ( S_T - B_\delta^\sigma ).  
\]
The first integral on the right side converges in probability to $ 0 $, as $ T \to \infty $, since our assumptions on $ \delta $ ensure that
$ B_\delta^\sigma $ is a semimartingale. The second integral can be interpreted as a stochastic Stieltjes integral, since the integrand is of (uniformly) bounded variation.
Using integration by parts, (\ref{UnifConvGT}) and (\ref{UnifTVGT}), we see that, with $ \| \cdot \|_\infty $ denoting  the supnorm over $ [s_0,1] $,
\[
  \left| \int_\gamma^{s_i} g_T^{v,s_i} d ( S_T - B_\delta^\sigma ) \right| \le 2 \sup_{v,s} \| g_T^{v,s} \|_\infty \| S_T - B_\delta^\sigma \|_\infty 
  + \| S_T - B_\delta^\sigma \|_\infty \sup_{s \in [s_0,1], v \le w} V( g_T^{v,s} ) ,
\]
but the right side converges to $0$, as $ T \to \infty $, a.s. Now $ d(G_T,G) = o(1) $ follows easily.
A further application of Theorem \ref{KurtzProtter} yields
\[
  \left( G_T^N, S_T, \int G_T^N \, d S_T \right) \Rightarrow \left( G^N,  B_\delta^\sigma,  \int G^N \, d B_\delta^\sigma \right),
\]
as $ T \to \infty $, in $ D([s_0,1];\R^3) $, where by definition $  \int G^N \, d B_\delta^\sigma $ is the process
\[
 \left\{  \int_0^s \eins_{\{v \le s_i\}} \left( \int_\gamma^{s_i} g^{v,s_i}(u) \, d B_\delta^\sigma(u) \right)_{i=1}^N \, d B_\delta^\sigma(v) :
   s \in [s_0,1] \right\}.
\]
Now we sample the process $ \int G^N \, d B_\delta^\sigma $ at the points $ s_1, \dots, s_N $.
Then the diagonal of the $ N \times N $ matrix with $i$th row given by the vector
\[ 
  \int_0^{s_i} G^N(v) \, d B_\delta^\sigma(v) ,
\]
$ i = 1, \dots, N $, equals $ ( Q_T(s_1), \dots, Q_T(s_N) ) $. Consequently, we may conclude that
\[ 
   T^2( Q_T(s_1), \dots, Q_T(s_N) )  \Rightarrow \diag \left(  \int_\gamma^{s_i} G^N(v) \, d B_\delta^\sigma(v)  \right)_{i=1}^N,
\]
as $ T \to \infty $, which completes the proof of (i). Let us now verify that under the assumptions  given in (ii) tightness of $ T^2 Q_T $ follows. 
Let $ s_0 \le a < b \le 1 $ and notice that due to (\ref{ModelPaper}) for $ \trunc{Ta} \le i \le \trunc{Tb} $
\begin{align*}
  E( \wh{m}_{h,-i} ) 
  & =
  \frac{1}{\sqrt{T}} \left[
    \sum_{j=\trunc{T\gamma}}^{i-1} K((i-j)/h) E(Y_j) / \sum_{j=1}^{i-1} K((i-j)/h) 
  \right] \\
  &  \le \frac{1}{\sqrt{T}} \left[
      \frac{ \sup_{s \in [a,b]} \int_\gamma^s K(\xi(s-z)) \delta(z) \, dz + o(1) }
      { \inf_{s\in[a,b]} \int_\gamma^s K(\xi(s-z)) \, dz + o(1) } 
    \right] \\
    & = O(1/\sqrt{T}),
\end{align*}
by positivity of the kernel, where the $ o(1) $ terms are uniform in $s$ and $i$, by virtue of Koksma's theorem.  We have 
\[
 E ( T^2 [ Q_T(b) - Q_T(a) ] )^4 
  = T^4 \sum_{i_1, \dots, i_4=\trunc{Ta}}^{\trunc{Tb}} E \left( \prod_{j=1}^4 \widehat{m}_{h,-i_j}^2 \right).
\]
When writing $ \wh{m}_{h,-i_j}^2 = ([\wh{m}_{h,-i_j} - E\wh{m}_{h,-i_j}] + E\wh{m}_{h,-i_j})^2 $, multiplying out and collecting terms,
we see that only the terms involving 
\[ 
  \wh{m}_{h,-i_j} - E\wh{m}_{h,-i_j} = \sum_{l=\trunc{T\gamma}}^{i_j-1} K((i_j-l)/h) \left(\sum_{l'=\trunc{T\gamma}}^{i_j-1} K((i_j-l')/h) \right)^{-1} (Y_l - E(Y_l)) 
\]
but not
$ E \wh{m}_{h,-i_j} $ have to be dealt with, since for $ \rho = 1, \dots, 8 $
\[
  E [\wh{m}_{h,-i_j} - E\wh{m}_{h,-i_j}]^{8-\rho}( E \wh{m}_{h,-i_j} )^{\rho} = O( T^{-\rho/2} ). 
\]
Therefore we can and will assume from now on that $ E(Y_j) = 0 $. 
For $ \trunc{Ta} \le i_1, \dots, i_4 \le \trunc{Tb} $ we have by non-negativity of $K$ and since
$ N_T( i_\nu / T ) \ge O(1/T) $ for $ \nu = 1, \dots, 4 $,
\begin{align*} 
  & E\left( \prod_{j=1}^4 \widehat{m}_{h,-i_j}^2 \right)  \\
  & \quad =  \sum_{j_1,k_1= \trunc{T\gamma}}^{i_1-1} \cdots \sum_{j_4,k_4=\trunc{T\gamma}}^{i_4-1}
  \frac{ \prod_{\nu = 1}^4 \prod_{l \in \{ j_\nu, k_\nu \}}  K(i_\nu/h - l/h)  }{ \prod_{\nu=1}^4 N_T(i_\nu/T)^2  } E(Y_{j_1} Y_{k_1} \cdots  Y_{j_4} Y_{k_4} )
   \\
  & \quad= O\left( \frac{\| K \|_\infty^8}{T^8} \sum_{j_1,k_1=\trunc{T\gamma}}^{i_1-1} \cdots \sum_{j_4,k_4=\trunc{T\gamma}}^{i_4-1} | E(Y_{j_1} Y_{k_1} \cdots  Y_{j_4} Y_{k_4}) | \right)\\
  & \quad = O( \max( i_1,\dots,i_4' )^4 / T^8 ) 
\end{align*}
Here we used the fact that a strictly stationary sequence $ \{ \xi_n \} $ ensuring the imposed moment and $ \alpha $-mixing conditions
satisfies $ \sum_{i_1,\dots,i_{2m}=1}^n | E(\xi_{i_1} \cdots \xi_{i_{2m}} ) | = O(n^m ) $, for $ m \in \N $, cf. \citet[proof of Theorem 1, p. 47]{Yokoyama1980}
and \citet{Kim1993} for the slightly weaker conditions.
Thus,
\[
 E ( T^2 [ Q_T(b) - Q_T(a) ] )^4 = O( |b-a|^4 )
\]
H\"older's inequality now ensures that for $ s_0 \le s_1 \le s_2 \le 1 $
\begin{align*}
 & E|T^2Q_T(s)-T^2Q_T(s_1)|^2|T^2Q_T(s_2)-T^2Q_T(s)|^2 \\
  & \qquad \le \sqrt{ E | T^2 Q_T(s) - T^2 Q_T(s_1) |^4 } \sqrt{ E | T^2Q_T(s_2) - T^2 Q_T(s) |^4 } \\
  & \qquad = O( | s - s_1 | | s_2 - s | ) \\
  & \qquad = O( |s_2 - s_1|^2 ),
\end{align*}
which verifies the criterion \citet[Theorem 15.6]{Billingsley1968}.
\end{proof}

\subsection{The Process $ L_T $ and the Cross-Validation Criterion}

The next theorem provides a functional central limit theorem for the process $ L_T $.

\begin{theorem}
\label{ThLT}
  Let $ \{ \epsilon_n \} $ is a strictly stationary sequence with $ E(\epsilon_1) = 0 $, $ E(\epsilon_1^4) < \infty $ and  $\alpha $-mixing coefficients,
  $ \{ \alpha(k) \} $, satisfying
  \[
   \sum_{k=0}^\infty [\alpha(k)]^{1/2}  < \infty \quad \text{and} \quad \sum_{k=1}^\infty k^{1+\zeta} [\alpha(k)]^{1-\zeta} < \infty,
  \]
  for some $ \zeta \in (0,1) $.
  Then
  \begin{equation}
  \label{DefL}
    T L_T(s) \Rightarrow \calL_\xi(s) = -2  \int_0^s 
     \frac{ \int_0^u K( \xi(u-v) ) \, d B_\delta^\sigma(v) }{ \int_0^u K( \xi(u-v) ) \, dv } \, d B_\delta^\sigma(u),
  \end{equation}
  in $ D([0,1];\R) $, as $ T \to \infty $. $ \calL_\xi $ is a.s. continuous.
\end{theorem}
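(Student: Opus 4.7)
The proof follows the template of Theorem \ref{ThQT} but is lighter because $L_T$ is a single outer sum rather than a double one. First I would rewrite $T L_T(s)$ as an iterated stochastic integral with respect to the partial-sum process $S_T$ of Lemma \ref{FCTLPartialSum}. Using $Y_i=\sqrt{T}\,\Delta S_T(i/T)$ and the kernel representation of $\widehat{m}_{h,-i}$, one obtains
\[
T L_T(s)\;=\;-2\int_{\trunc{Ts_0}/T}^{\trunc{Ts}/T} \phi_T(u-)\,dS_T(u),\qquad
\phi_T(u)\;=\;\frac{1}{N_T(u)}\int_{\trunc{T\gamma}/T}^{u} K\!\left(\tfrac{\trunc{Tu}}{h}-\tfrac{\trunc{Tv}}{h}\right)dS_T(v),
\]
where the left-limit in $u$ reflects the predictability of $\widehat{m}_{h,-i}$ (which uses only $Y_j$ with $j\le i-1$).

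Second, the $\alpha$-mixing and moment assumptions yield the invariance principle $S_T\Rightarrow B_\delta^\sigma$ by \citet[Corollary 1]{Herrndorf1984}, exactly as in part (ii) of the proof of Theorem \ref{ThQT}. After passing to a Skorohod/Dudley/Wichura coupling one may assume $\|S_T-B_\delta^\sigma\|_\infty\to 0$ almost surely. The kernel ratio $(u,v)\mapsto K(\trunc{Tu}/h-\trunc{Tv}/h)/N_T(u)$ is uniformly Lipschitz, converges uniformly on $\{\gamma\le v\le u\le 1\}$ to $(u,v)\mapsto K(\xi(u-v))/N(u)$, and has uniformly bounded total variation in $v$; the key ingredient is the bound $\sup_{u\in[\gamma,1]}|N_T(u)-N(u)|=O(T^{-1})$ from (\ref{UnifKonvN}) together with $N(u)\ge N(\gamma)>0$. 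An application of Theorem \ref{KurtzProtter} to the inner integral then yields the joint convergence $(\phi_T,S_T)\Rightarrow(\phi,B_\delta^\sigma)$ in $D([s_0,1];\R^2)$, where $\phi(u)=N(u)^{-1}\int_\gamma^{u}K(\xi(u-v))\,dB_\delta^\sigma(v)$. A second application of Theorem \ref{KurtzProtter}, using that $S_T$ is a semimartingale with uniformly bounded compensator variation (Lemma \ref{FCTLPartialSum}) and that $\phi_T$ is predictable by construction, then delivers the fidi convergence $T L_T\Rightarrow\calL_\xi=-2\int\phi\,dB_\delta^\sigma$. Almost-sure continuity of $\calL_\xi$ is inherited from the continuity of the paths of $B_\delta^\sigma$ and of $u\mapsto\phi(u)$.

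Finally, tightness must be verified. Following the pattern used for $Q_T$, the plan is to establish a fourth-order increment bound $E\,|TL_T(b)-TL_T(a)|^{4}\le C(b-a)^{2}$ for $s_0\le a<b\le 1$; H\"older's inequality then yields the Billingsley moment criterion \citet[Theorem 15.6]{Billingsley1968} with exponent $\alpha=1$. The bound is obtained by decomposing $Y_i\widehat{m}_{h,-i}=\widehat{m}_{h,-i}(Y_i-EY_i)+\widehat{m}_{h,-i}EY_i$: the first piece is a martingale difference, since $\widehat m_{h,-i}$ is $\calF_{i-1}$-measurable, so Burkholder's inequality reduces its fourth moment to the second moment of the predictable quadratic variation, which is controlled via the summation estimate $\sum_{i_1,\dots,i_{2m}=1}^{n}|E(\epsilon_{i_1}\cdots\epsilon_{i_{2m}})|=O(n^{m})$ of \citet{Yokoyama1980} and \citet{Kim1993} combined with the $O(1/T)$ kernel weights; the second piece is of order $O(b-a)$ thanks to the piecewise Lipschitz regularity of $\delta$ and contributes only a lower-order drift. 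The most delicate point of the whole argument is the double application of Theorem \ref{KurtzProtter}: one must carefully verify both the predictability of $\phi_T$ and the uniform boundedness of its total variation in $v$, so that the outer It\^o integration genuinely inherits the correct weak limit.
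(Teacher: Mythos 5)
Your overall architecture coincides with the paper's: represent $TL_T$ as an iterated It\^o integral $-2\int I_T\,dS_T$ with $I_T(u)=N_T(u)^{-1}\int K((\trunc{Tu}-\trunc{Tv})/h)\,dS_T(v)$, invoke Herrndorf's invariance principle and a Skorohod/Dudley/Wichura coupling, apply Theorem \ref{KurtzProtter} twice, and control fourth moments via the Yokoyama/Kim bound. However, there is one genuine gap in the middle of your argument. The first application of Theorem \ref{KurtzProtter}, carried out for \emph{fixed} $u$, only yields convergence of $\phi_T(u)$ at each fixed $u$ (i.e.\ fidi convergence of the integrand process $u\mapsto\phi_T(u)$); it does not by itself deliver the joint convergence $(\phi_T,S_T)\Rightarrow(\phi,B_\delta^\sigma)$ in $D([s_0,1];\R^2)$ that you assert, and that joint \emph{process-level} convergence is precisely the hypothesis required for your second application of Theorem \ref{KurtzProtter}. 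The uniform Lipschitz and bounded-variation properties of the deterministic kernel ratio that you cite concern the inner integrand as a function of $v$ for fixed $u$; they say nothing about the regularity of the random path $u\mapsto\phi_T(u)$. What is missing is a tightness argument for $\phi_T$ itself, which is exactly where the paper deploys the fourth-moment machinery: it shows $\|I_T(s)-I_T(r)\|_4=O(|s-r|^{1/2})$ via the Yokoyama-type bound, concludes $I_T\Rightarrow I$ in $D([s_0,1];\R)$, re-couples, and only then applies Theorem \ref{KurtzProtter} to the outer integral.

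Your final tightness computation is aimed at the wrong process: you bound the increments of $TL_T$, but once $(\phi_T,S_T)\Rightarrow(\phi,B_\delta^\sigma)$ holds as processes, Theorem \ref{KurtzProtter} already delivers weak convergence of $\int\phi_T\,dS_T$ in $D$ (tightness included), so that step is redundant; conversely, it cannot substitute for the tightness of $\phi_T$, because even the fidi convergence of $TL_T(s)=\int_{s_0}^s\phi_T\,dS_T$ at a single $s$ depends on the whole path of $\phi_T$ on $[s_0,s]$ and hence cannot be deduced from pointwise convergence of $\phi_T$ alone. The repair is straightforward with the tools you already have: redirect the fourth-moment increment bound from $TL_T$ to $\phi_T$ (as in the paper, no Burkholder step is needed; the mixing moment bound applied to the kernel-weighted sums suffices), establish $\phi_T\Rightarrow\phi$ in $D([s_0,1];\R)$, and then the second Kurtz--Protter application is legitimate and finishes the proof.
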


\begin{proof}
Again, by virtue of the Skorohod/Dudley/Wichura representation theorem, we assume w.l.o.g. that $ \| S_T -  B_\delta^\sigma \|_\infty \to 0 $, a.s., as $ T \to \infty $. Notice that
\begin{align*}
  L_T(s) & = - \frac2T
    \sum_{i=\trunc{Ts_0}}^{\trunc{Ts}} 
      \frac{ \sum_{j=\trunc{T\gamma}}^{i-1} K((i-j)/h) Y_j / \sqrt{T} }{ T^{-1} \sum_{j'=1}^{i-1} K((i-j')/h) } \frac{Y_i}{\sqrt{T}} \\
      & = - \frac2T \int_{\trunc{Ts_0}/T}^{\trunc{Ts}/T}
      \left( u \mapsto 
      \frac{ \sum_{j=1}^{\trunc{Tu}-1} K((\trunc{Tu}-j)/h) Y_j/\sqrt{T} }{ T^{-1} \sum_{j'=1}^{\trunc{Tu}-1} K((\trunc{Tu}-j')/h) }
      \right) \, d S_T(u)
\end{align*}
leading us to the representation
\[
  T  L_T(s) = -2 \int_{\trunc{Ts_0}/T}^{\trunc{Ts}/T} I_T(u) \, d S_T(u)
\]
with
\begin{align*}
  I_T(u) &=
    \int_{\trunc{T\gamma}/T}^{\trunc{Tu}/T-1/T} 
      K( ( \trunc{Tu} - \trunc{Tv} )/h ) \, d S_T(v) \ \biggl/ \ 
      \int_{\trunc{T\gamma}/T}^{\trunc{Tu}/T-1/T} K( ( \trunc{Tu} - \trunc{Tz} )/h ) \, dz \\
      & = \int_{\trunc{T\gamma}/T}^{\trunc{Tu}/T-1/T}  E_T^u(v) \, d S_T(v), 
\end{align*}
where $ N_T $  is defined in (\ref{DefNT}) and
\[
  E_T^u(v) = K( ( \trunc{Tu} - \trunc{Tv} )/h ) N_T^{-1}(u), \qquad u,v \in [s_0,1], v \le u.
\]
Recall that $ N_T(s) $, $ s \in [s_0,1] $, is not smaller than $ \inf_{s \in [s_0,1]} \int_0^{s} K(\xi(s-z)) \, dz + o(1) $ which is bounded away from $0$.
As in the proof of Theorem~\ref{ThQT}, one can show that for fixed $ u $
\[
  \left( v \mapsto K((\trunc{Tu}-\trunc{Tv})/h) N_T^{-1}(u), S_T \right)
  \Rightarrow
  ( v \mapsto K(\xi(u-v)), S_T ),
\]
in $ D( [s_0,u]; \R^2 ) $, as $ T \to \infty $, such that Theorem \ref{KurtzProtter} guarantees that the process
\[
  \left( v \mapsto K((\trunc{Tu}-\trunc{Tv})/h), S_T,   \left\{ \int_{1/T}^{\trunc{Tu'}/T - 1/T} K( ( \trunc{Tv} - \trunc{Tu} )/ h ) \, d S_T(v) : u' \in [s_0,u] \right\} \right)
 \]
converges weakly in $ D([s_0,u];\R^3) $ to  the process
\[
   \left( v \mapsto K( \xi(u-v) ), S_T,  \left\{ \int_0^{u'} K( \xi(u-v) ) \, d B_\delta^\sigma(v) : u' \in [s_0,u]
 \right\} \right).
\]
Now we apply the diagonal argument given in the proof of  Theorem \ref{ThQT} to obtain the fidi
convergence of  $ I_T $,
\[
  I_T(u) \stackrel{fidi}{\to}  I(u) = \int_0^u K( \xi(u-v) ) \, d B_\delta^\sigma(u) \ \biggr / \ \int_0^u K( \xi(u-v) ) \, dv, 
\]
as $ T \to \infty $. To extend that result to weak convergence in $ D([0,1];\R) $, it remains to show tightness of the process $ L_T $. 
We may argue as in the proof of Theorem \ref{ThQT}. Again applying \citet[proof of Theorem 1 p. 47]{Yokoyama1980}, we obtain
\begin{align*}
  E\left( \int_{\trunc{Ta}}^{\trunc{Tb}} E_T(u) \, d S_T(u) \right)^4
  & = \frac{1}{T^2} \sum_{i_1,\dots,i_4 = \trunc{Ta}}^{\trunc{Tb}} \prod_{j=1}^4 E_T(i_j/T) E(Y_{i_1} \cdots Y_{i_4} ) \\
  & \le \sup_{x \in \R, T \ge 1} | E_T(x) |^2 \frac{1}{T^2} \sum_{i_1,\dots,i_4=\trunc{Ta}}^{\trunc{Tb}} | E(Y_{i_1} \cdots Y_{i_4}) | \\
  & = O\left( \sup_{x \in \R, T \ge 1} | E_T(x) |^2 \left( \frac{ \trunc{Tb} - \trunc{Ta} }{T} \right)^2 \right) \\
  & = O( |b-a|^2 ).
\end{align*}
Thus, for $ s_0 \le r \le s \le 1 $,
\[
  \| I_T(s) - I_T(r) \|_4 = 
  \left\| -2 \int_{\trunc{Tr}/T}^{\trunc{Ts}/T} E_T(u) \, d S_T(u) \right\|_4
  = O( |s-r|^{1/2} ).
\]
H\"older's inequality now entails that
\begin{align*}
  E | I_T(s) - I_T(s_1) |^2 | I_T(s_2) - I_T(s) |^2
  & \le 
  \sqrt{ E | I_T(s) - I_T(s_1) |^4 }  \sqrt{ E | I_T(s_2) - I_T(s) |^4 } \\
  & = O( |s-s_1| | s_2 -s | ) \\
  & = O( | s_2 - s_1 |^2 ),
\end{align*}
thus establishing tightness. We can conclude that 
\[ I_T \Rightarrow I \qquad \text{in $ D([s_0,1];\R) $}, \] as $ T \to \infty $. Again considering equivalent processes
on a new probability space, we may assume that $ \| S_T - B_\delta^\sigma \|_\infty \to 0 $ as well
as $ \| I_T - I \|_\infty $, as $ T \to \infty $. The same argument as used to obtain (\ref{SM0}) yields 
$ (S_T, I_T) \Rightarrow (B_\delta^\sigma, I) $ in $ D([s_0,1],\R^2) $, as $ T \to \infty $. A further
application of Theorem \ref{KurtzProtter} yields
\[
  (S_T, I_T, T L_T) = \left( S_T, I_T, \int L_T \, d S_T \right) \Rightarrow \left( B_\delta^\sigma, I, \calL_\xi \right),
\]
in $ D([s_0,1];\R^3 ) $, as $ T \to \infty $, which completes the proof.
\end{proof}

We may now easily combine the results of Theorem \ref{ThQT} and Theorem \ref{ThLT}. Since the convergence rates
of $ Q_T $ and $ L_T $ differ, the asymptotic distribution of $ T C_{T,s} $ is dominated by the process $ T L_T $.

\begin{theorem} 
\label{FCLTQT}
 Suppose that $ \{ \epsilon_n \} $ is a strictly stationary martingale difference sequence with $ E(\epsilon_1) = 0 $, $ E(\epsilon_1^8) < \infty $ and
  $\alpha $-mixing coefficients satisfying
  \[
   \sum_{k=0}^\infty [\alpha(k)]^{3/4}  < \infty \quad \text{and} \quad \sum_{k=1}^\infty k^{1+\zeta} [\alpha(k)]^{1-\zeta} < \infty,
  \]
  for some $ \zeta \in (0,1) $. 
  Then the cross-validation objective function, $ C_{T,s}(h) $ satisfies a functional central limit theorem,
  \[
    T C_{T,s}(h) \Rightarrow \calL_\xi(s),  
  \]
  as $ T \to \infty $, in the space $ D([0,1];\R) $, where the process $ \calL_\xi $ is as in (\ref{DefL}).
\end{theorem}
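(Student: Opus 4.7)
The plan is to combine the two previous theorems via a Slutsky-type argument in the Skorohod space, exploiting the fact that $Q_T$ and $L_T$ have different rates of convergence. Recall the decomposition $T C_{T,s}(h) = T L_T(s) + T Q_T(s)$, so the strategy reduces to showing that $T L_T$ converges weakly to $\calL_\xi$ while $T Q_T$ is asymptotically negligible in the uniform topology.

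First, I would verify that the hypotheses imposed on $\{\epsilon_n\}$ in the present theorem are strong enough to invoke both Theorem~\ref{ThLT} and Theorem~\ref{ThQT}(ii). The requirement $E(\epsilon_1^8)<\infty$ trivially implies $E(\epsilon_1^4)<\infty$, and $\sum_k [\alpha(k)]^{3/4}<\infty$ implies $\sum_k [\alpha(k)]^{1/2}<\infty$ (since $[\alpha(k)]^{1/2}\le [\alpha(k)]^{3/4}$ eventually, as $\alpha(k)\to 0$), so the mixing condition for Theorem~\ref{ThLT} is subsumed. Consequently Theorem~\ref{ThLT} yields $T L_T \Rightarrow \calL_\xi$ in $D([0,1];\R)$, and Theorem~\ref{ThQT}(ii) yields tightness of $\{T^2 Q_T\}$ in $D([s_0,1];\R)$.

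Second, tightness of $T^2 Q_T$ together with the fidi limit identified in Theorem~\ref{ThQT}(i) implies $T^2 Q_T \Rightarrow G$ in $D([s_0,1];\R)$ for some proper limit process $G$. Hence $T Q_T = T^{-1}\cdot T^2 Q_T$ is the product of a deterministic null sequence and a weakly convergent sequence, so $\|T Q_T\|_\infty = T^{-1} \|T^2 Q_T\|_\infty = o_P(1)$ by the continuous mapping theorem applied to the supnorm (which is continuous on the weakly relatively compact set of paths, and here we use that $\|T^2 Q_T\|_\infty = O_P(1)$ follows from tightness). Thus $T Q_T \to 0$ uniformly, in probability, and \emph{a fortiori} in the Skorohod topology.

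Third, I would conclude by Slutsky's theorem in $D([0,1];\R)$: if $X_T \Rightarrow X$ and $d(Y_T, 0) \to 0$ in probability in the Skorohod metric, then $X_T + Y_T \Rightarrow X$. Applying this with $X_T = T L_T$ and $Y_T = T Q_T$ gives $T C_{T,s}(h) \Rightarrow \calL_\xi$ in $D([0,1];\R)$. The only genuinely delicate point is the passage from ``tightness of $T^2 Q_T$'' to ``$T^2 Q_T$ is stochastically bounded in supnorm,'' but since tightness in the Skorohod space of a sequence of \cadlag processes on a compact interval controls the maxima of the paths (any tight family has uniformly bounded $\sup$-norm in probability), this step is standard and poses no real obstacle.
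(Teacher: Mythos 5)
Your overall strategy is exactly the one the paper intends: the paper offers no written proof of this theorem beyond the remark that the rates of $Q_T$ and $L_T$ differ, and your decomposition $TC_{T,s}=TL_T(s)+TQ_T(s)$, the deduction $\|TQ_T\|_\infty = T^{-1}\|T^2Q_T\|_\infty = o_P(1)$ from tightness plus fidi convergence of $T^2Q_T$, and the Slutsky step $d(X_T+Y_T,X_T)\le\|Y_T\|_\infty$ are all correct and are precisely what is needed to make that remark rigorous.

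There is, however, one genuine error in your verification of the hypotheses. You claim that $\sum_k[\alpha(k)]^{3/4}<\infty$ implies $\sum_k[\alpha(k)]^{1/2}<\infty$ because ``$[\alpha(k)]^{1/2}\le[\alpha(k)]^{3/4}$ eventually, as $\alpha(k)\to 0$.'' The inequality goes the other way: for $0\le x\le 1$ one has $x^{1/2}\ge x^{3/4}$, so summability of $[\alpha(k)]^{3/4}$ is the \emph{weaker} condition (take $\alpha(k)=k^{-8/5}$, for which $\sum_k[\alpha(k)]^{3/4}<\infty$ but $\sum_k[\alpha(k)]^{1/2}=\infty$). Hence the mixing hypothesis of the present theorem does not, as stated, subsume that of Theorem~\ref{ThLT}, and your appeal to Theorem~\ref{ThLT} is not justified by this argument. (This mirrors an inconsistency in the paper's own hypotheses; to repair the step one must either assume $\sum_k[\alpha(k)]^{1/2}<\infty$ outright, or observe that the second condition $\sum_k k^{1+\zeta}[\alpha(k)]^{1-\zeta}<\infty$ forces $\alpha(k)=o\bigl(k^{-(1+\zeta)/(1-\zeta)}\bigr)$ and therefore yields $\sum_k[\alpha(k)]^{1/2}<\infty$ whenever $\zeta>1/3$, which is an additional restriction not present in the statement.) The remaining verifications — $E(\epsilon_1^8)<\infty\Rightarrow E(\epsilon_1^4)<\infty$, tightness in $D$ on a compact interval implying stochastic boundedness of the supremum, and the Slutsky argument — are all sound.
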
 

\subsection{The Cross-Validated Bandwidth Process}
\label{CrossValidatedPr}

To simplify the exposition, let us from now on strengthen Assumption \ref{ST:Bandwidth} to
\[
  h = h(\xi) = T/\xi,
\]
such that the problem is parameterized by $ \xi $. Let us assume that  optimization is done over a fine grid
\[
  \Xi = \{ \xi^1, \dots, \xi^M \},
\]
where $M \in \N$ is arbitrary large but fixed. Now at each time instant $ s $ the minimum
\[
  \xi_T^*(s) = \argmin_{\xi \in \Xi} \bar{C}_{T,s}( \xi ) 
\]
is calculated, where $ \bar{C}_{T,s}(\xi) = C_{T,s}( T/\xi ) $.  Here and in the sequel the operator $ \argmin_{a \in A} f(a) $ 
for a function $ f : A \to \R $ refers to the smallest $ a \in A $ such that $ f(a) \le f(x) $ for all $ x \in A $, thus leading
to an unique definition.

We obtain the following corollary.

\begin{corollary} Given the conditions of Theorem \ref{FCLTQT}, 
\begin{equation}
  \{ T \bar{C}_{T,\cdot}( \xi ) : \xi \in \Xi \} \Rightarrow \{ \calL_\xi(\cdot) : \xi \in \Xi \},
\end{equation}
as $ T \to \infty $, in the product space $ ( D( [s_0,1]; \R ) )^M $.  Consequently,
\[
  \xi_T^* \Rightarrow \argmin_{\xi \in \Xi} \calL_\xi,
\]
as $ T \to \infty $, in $D([s_0,1];\R) $.
\end{corollary}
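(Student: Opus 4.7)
The plan has two steps: first lift the marginal functional central limit theorem of Theorem \ref{FCLTQT} to joint weak convergence over the finite grid $\Xi$, then derive convergence of $\xi_T^*$ by a continuous-mapping argument for the $\argmin$ functional.

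For the first step, I would revisit the proof of Theorem \ref{ThLT} and observe that for every fixed $\xi \in \Xi$ the process $TL_T(\cdot)$ arises as a stochastic integral of the ratio process $I_T(\cdot;\xi)$ against the common partial-sum semimartingale $S_T$, and $I_T(\cdot;\xi)$ itself is constructed from $S_T$ via Theorem \ref{KurtzProtter} and deterministic kernel quantities that depend continuously on $\xi$. Since $\Xi$ is finite, bundling the $M$ integrand processes into the $(M+1)$-dimensional tuple $(S_T, I_T(\cdot;\xi^1), \dots, I_T(\cdot;\xi^M))$ and invoking Theorem \ref{KurtzProtter} jointly yields weak convergence in $D([s_0,1];\R^{M+1})$ to $(B_\delta^\sigma, I(\cdot;\xi^1), \dots, I(\cdot;\xi^M))$; the uniform boundedness and uniform-variation estimates needed for the hypothesis of Theorem \ref{KurtzProtter} are precisely those already established coordinate-wise in the proofs of Theorems \ref{ThQT} and \ref{ThLT}. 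A further joint application of Theorem \ref{KurtzProtter}, combined with the fact from Theorem \ref{ThQT} that $T^2 Q_T$ is tight and hence $Q_T$ enters at a lower rate, produces $(T\bar C_{T,\cdot}(\xi^1), \dots, T\bar C_{T,\cdot}(\xi^M)) \Rightarrow (\calL_{\xi^1}, \dots, \calL_{\xi^M})$ in the product space $(D([s_0,1];\R))^M$, which is the first assertion of the corollary.

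For the second step, consider the operator
\[
  \Phi : (D([s_0,1];\R))^M \to D([s_0,1];\R), \qquad \Phi(f_1,\dots,f_M)(s) = \argmin_{\xi^j \in \Xi} f_j(s),
\]
with the smallest-index tie-breaking rule built into $\argmin$, so that $\xi_T^* = \Phi(T\bar C_{T,\cdot}(\xi^1), \dots, T\bar C_{T,\cdot}(\xi^M))$. Because each $\calL_\xi$ is a.s. continuous by Theorem \ref{ThLT}, the operator $\Phi$ is continuous at a limit path $(g_1,\dots,g_M)$ whenever, for each pair $i \ne j$, the set $\{s : g_i(s) = g_j(s)\}$ has empty interior; on such paths small uniform perturbations change the pointwise $\argmin$ only on a neighbourhood of isolated crossing times, and the resulting cadlag path converges in the Skorohod metric. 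Given this continuity, the continuous mapping theorem combined with step one yields $\xi_T^* \Rightarrow \argmin_{\xi \in \Xi} \calL_\xi$ in $D([s_0,1];\R)$.

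The main obstacle is the continuity analysis of $\Phi$: the $\argmin$ functional is not continuous in the Skorohod topology at paths where two coordinates coincide on a non-degenerate time interval, so we must show that the limit law puts no mass on such pathologies. To this end, for $\xi^i \ne \xi^j$ I would write $\calL_{\xi^i} - \calL_{\xi^j}$ as a stochastic integral against the semimartingale $B_\delta^\sigma = \int \delta\,d\lambda + \sigma B$, with integrand equal to the difference of the two normalized kernel quotients appearing in (\ref{DefL}). Since these quotients differ as soon as $\xi^i \ne \xi^j$, the martingale part of the difference has a strictly positive quadratic variation on every subinterval, and a standard application of the Dambis--Dubins--Schwarz time change (or the L\'evy characterisation of the zero set of a continuous local martingale with non-degenerate bracket) shows that $\{s : \calL_{\xi^i}(s) = \calL_{\xi^j}(s)\}$ is a.s. nowhere dense. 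Taking a union over the finitely many pairs from $\Xi$ preserves this property, which is the input required to apply the continuous mapping theorem and close the argument.
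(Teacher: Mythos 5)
Your first step is sound and close in spirit to the paper's own argument, though the mechanics differ: the paper proves joint convergence over the finite grid $\Xi$ by checking tightness coordinatewise and establishing fidi convergence of arbitrary linear combinations $\sum_{\xi\in\Xi}\lambda_\xi T\bar{C}_{T,\cdot}(\xi)$ (a Cram\'er--Wold device), reusing the integral representation and the uniform convergence of $\sum_\xi \lambda_\xi g_T^{v,s}(\cdot;\xi)$; you instead bundle the $M$ integrand processes with the common integrator $S_T$ into one tuple and invoke Theorem \ref{KurtzProtter} once. Both routes rest on the same two facts --- a single driving semimartingale $S_T$ and deterministic kernel weights converging uniformly over the finite set $\Xi$ --- and your version is arguably more economical, with the caveat that Theorem \ref{KurtzProtter} as stated is for $D([a,b];\R^2)$, so you need its vector-valued form (which the paper itself also uses implicitly for $G_T^N$). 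Your observation that $TQ_T=T^{-1}\cdot T^2Q_T\Rightarrow 0$ because $T^2Q_T$ converges weakly, so that $TL_T$ dominates, matches the paper.

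The genuine gap is in your continuity analysis of the pointwise-argmin map $\Phi$. The paper disposes of this step in a single sentence, so you are right to treat it as the crux; but nowhere-density of the coincidence sets $\{s:\calL_{\xi^i}(s)=\calL_{\xi^j}(s)\}$ does not give continuity of $\Phi$ into $D([s_0,1];\R)$ with the $J_1$ topology, and your own Dambis--Dubins--Schwarz argument undercuts the ``isolated crossing times'' picture on which your continuity claim rests. If the martingale part of $\calL_{\xi^i}-\calL_{\xi^j}$ has non-degenerate bracket, then conditionally on this continuous semimartingale hitting zero at some $t_0\in(s_0,1)$ --- an event of positive probability, since it starts from a nonzero value at $s_0$ --- its zero set near $t_0$ is, after the time change, of Brownian type and therefore has no isolated points. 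At such a $t_0$ the pointwise argmin of the limit oscillates between two indices infinitely often in every neighbourhood, hence fails to have one-sided limits and does not lie in $D([s_0,1];\R)$ at all; a fortiori $\Phi$ is not a.s.\ continuous at the limit. Even at a tangential touch without sign change, a uniformly small perturbation can create a spike of fixed jump height on a shrinking interval, which does not converge in $J_1$. Note also that the continuous mapping theorem of \citet{SeijoSen2011} concerns the smallest argmax over the \emph{time} index of a single path, not a pointwise minimum over a finite family of processes, so it cannot be cited to close this step either. To repair the argument one must either weaken the mode of convergence (e.g., convergence of $\xi_T^*(s)$ for each fixed $s$ with $P(\text{tie at }s)=0$, or convergence in a topology tolerating oscillation) or restrict to events on which no ties occur on $[s_0,1]$; as written, your step two --- like the paper's one-line assertion --- does not establish the claimed convergence in $D([s_0,1];\R)$.
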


\begin{proof}
  The process $ \{ T \bar{C}_{T,s}( \xi ) : \xi \in \Xi \}  $ is tight, since the coordinate processes 
  $ \{ T \bar{C}_{T,s}( \xi ) : s \in [s_0,1] \} $ are tight for each $ \xi \in \Xi $. To check convergence
  of the fidis, we consider a linear combination 
  \[ H_T(s) = \sum_{\xi \in \Xi} \lambda_\xi T^2 Q_T^\xi(s) \]
  for $ \lambda_\xi $, $ \xi \in \Xi $, such that not all $ \lambda_\xi $ vanish. We can represent $ H_T(s) $ as
  \[
   \int_{\trunc{Ts_0}/T}^{\trunc{Ts}/T} \int_{\trunc{Ts_0}/T}^{\trunc{Ts}/T} \int_{\trunc{T\gamma}/T}^{\trunc{Ts}/T} \sum_\xi D(uT/h, vT/h, wT/h) (N_T^\xi)^{-2}(w) \, dw \, d S_T(u) \, d S_T(v)
  \] 
  We have shown in the proof of Theorem \ref{FCLTQT} that for fixed $ \xi  \in \Xi $ 
  \[
    g_T^{v,s}(u;\xi) = \int_{\trunc{T\gamma}/T}^{\trunc{Ts}/T} D( u T/h, v T/h, \trunc{Tw}/h) (N_T^\xi)^{-2}(w) \, d w \, d S_T( u ) |, d S_T( v )
  \]
  converges uniformly in $u, v \in [\gamma,1] $ and $ w \ge s_0 $ to 
  \[
    g^{v,s}(u;\xi) = \int_\gamma^s D( \xi u, \xi v, \xi w )(N^\xi)^{-2}( w ) \, d w,
  \]
  as $ T \to \infty $. Then the triangle inequality shows that $ \sum_{\xi \in \Xi} \lambda_\xi  g_T^{v,s}(u;\xi) $ converges uniformly to
  $  \sum_{\xi \in \Xi} \lambda_\xi  g^{v,s}(u;\xi)  $, as $ T \to \infty $. Now we can apply exactly the same arguments as in the proof of Theorem \ref{FCLTQT} 
  to obtain the fidi convergence 
  \[ 
    ( H_T(s_1), \cdots, H_T(s_N) ) \Rightarrow \diag \left( \int_\gamma^{s_i} G^{N,\xi}(v) \, d B(v) \right)_{i=1}^N,
   \] 
   as $ T \to \infty $, for  fixed $ s_1, \dots, s_N $. The same chain of arguments shows that the fidis of $ \sum_{\xi \in \Xi} \lambda_\xi T L_T^\xi( \cdot ) $
  converge weakly to the fidis of $ \sum_{\xi \in \Xi} \lambda_\xi \calL_\xi(\cdot) $, such the fidi convergence of $ \sum_{\xi \in \Xi} \lambda_\xi \bar{C}_{T,\cdot}(\xi) $
  follows. Again, tightness of the linear combination follows easily from the triangle inequality for the $ L_p $ norm.
  Since $ \Xi $ is a finite set, we immediately obtain that $ \{ \bar{C}_{T,s}( \xi) : \xi \in \Xi \} $ converges weakly
  to $ \{ \calL_\xi : \xi \in \Xi \} $, as $ T \to \infty $. But this implies the weak convergence result for the smallest
  minimizer.
\end{proof}

\section{AN ANSCOMBE-TYPE THEOREM FOR RANDOM TIME HORIZONS}
\label{Sec:Anscombe}

The results of the previous sections assume that monitoring stops latest at the non-random time horizon $T$, and the 
theory is nicely captured by sequential empirical processes being elements of Skorohod spaces of functions defined on
$ [0,1] $, such as $ D([0,1];\R) $. Here the unit interval corresponds to the physical time interval $ [0,T] $. The limit
theorems then provide approximations to the true distribution of the sequential processes when $T$ is fixed but large.

Let us now assume that the time horizon $T$ is determined by a parameterized family of random experiments given by a family
$ \{ \tau_a : a > 0 \} $ of random variables, frequently stopping times, taking values in the natural numbers. This may happen, 
if, for example, the time horizon is determined as the time instant where cumulated costs exceed a threshold for the first time.
The question arises whether in limit theorems, say for (standardized) sums of $T$ terms, one may replace $ T $, assumed to tend to
$ \infty $, by a family of random variables indexed by $ a > 0 $, which behaves as $ \lambda  a $, $ \lambda $ a positive constant, 
as $ a \to \infty $, a condition which ensures that $ \tau_a $ tends to $ \infty $ as $ a \to \infty $, such that one can hope that
the asymptotics $ T \to \infty $ can be replaced by $ a \to \infty $ when replacing $ T $ by $ \tau_a $. This issue
has been extensively studied in the literature. Anscombe's seminal paper on this topic, \citet{Anscombe1952},
gave sufficient conditions for this to be true. Applied to sums of i.i.d. random variables, his result is as follows.

\begin{theorem} {\sc (Anscombe, 1952)}\\
\label{ThAnscombe}
Let $ X_1, X_2, \dots $ be i.i.d. random variables with mean $0$ and common variance $ \sigma^2 \in (0,\infty) $ and put
$ S_n = \sum_{i=1}^n X_i $, $ n \in \N $. Suppose that
the family $ \{ \tau_a : a > 0 \} $ of random indices satisfies 
\begin{equation}
\label{AnscCond}
  \frac{\tau_a}{a} \stackrel{P}{\to} \lambda \in (0, \infty), 
\end{equation}
as $ a \to \infty $. Then
\[
  \frac{ S_{\tau_a} }{ \sigma \sqrt{\tau_a} } \stackrel{d}{\to} N(0,1),
\]
as well as
\[
  \frac{ S_{\tau_a} }{ \sigma \sqrt{ \lambda a } }  \stackrel{d}{\to} N(0,1),
\]
as $ a \to \infty $.
\end{theorem}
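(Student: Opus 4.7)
The plan is to combine Donsker's invariance principle with a random change of time argument, which fits the paper's broader philosophy of treating randomly determined time horizons as random time changes.

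First, I would invoke the classical Donsker functional central limit theorem: for any fixed $K > 0$, the rescaled partial sum process
\[
W_a(t) = \frac{S_{\trunc{at}}}{\sigma\sqrt{a}}, \qquad t \in [0,K],
\]
converges weakly in $D([0,K];\R)$ to a standard Brownian motion $B$ as $a \to \infty$. I would choose $K > \lambda$; by assumption (\ref{AnscCond}), $\tau_a/a \to \lambda$ in probability, so the event $\{\tau_a/a \le K\}$ has probability tending to $1$. Since the second coordinate converges to a deterministic constant, joint weak convergence $(W_a, \tau_a/a) \Rightarrow (B, \lambda)$ in $D([0,K];\R) \times [0,K]$ follows automatically from the two marginal convergences.

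Second, the evaluation map $\Phi(f,t) = f(t)$ on $D([0,K];\R) \times [0,K]$ is continuous at every pair $(f,t)$ where $f$ is continuous at $t$; since $B$ is almost surely continuous everywhere on $[0,K]$, the continuous mapping theorem yields
\[
\frac{S_{\tau_a}}{\sigma\sqrt{a}} = W_a(\tau_a/a) \Rightarrow B(\lambda) \stackrel{d}{=} N(0,\lambda),
\]
which, after dividing by $\sqrt{\lambda}$, gives the second displayed convergence. For the first claim I would write
\[
\frac{S_{\tau_a}}{\sigma\sqrt{\tau_a}} = \frac{S_{\tau_a}}{\sigma\sqrt{\lambda a}} \cdot \sqrt{\frac{\lambda a}{\tau_a}},
\]
and apply Slutsky's theorem, noting that the second factor tends to $1$ in probability by (\ref{AnscCond}).

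The main obstacle is the subtlety that the coordinate projections $\pi_t$ on Skorohod space are \emph{not} jointly continuous in $(f,t)$ in general. However, continuity holds precisely on the set of pairs $(f,t)$ where $f$ is continuous at $t$, and this set carries full measure under the limit law because Brownian paths are almost surely continuous. A minor technical point is that one must truncate to a compact interval $[0,K]$ in order to apply the invariance principle; this is harmless since $P(\tau_a/a > K) \to 0$ for any $K > \lambda$, and the argument can be closed by a standard $\varepsilon$-$K$ approximation.
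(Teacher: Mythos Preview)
The paper does not actually supply a proof of Theorem~\ref{ThAnscombe}; it is quoted as Anscombe's classical 1952 result, and the surrounding discussion merely recalls Anscombe's original argument based on the \emph{uniform continuity in probability} condition~(\ref{UniformContinuity}). So there is no in-paper proof to compare against directly.

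That said, your argument is correct and is precisely the modern functional route the paper itself advocates and later exploits in Theorem~\ref{AnscombeTypeTheorem}: embed the stopped quantity into a Donsker-type process, regard the random index as a random change of time $\Phi_a(t)=(\tau_a/a)\,t$, obtain joint weak convergence $(W_a,\tau_a/a)\Rightarrow(B,\lambda)$ from the fact that the second coordinate has a deterministic limit, and finish with the continuous mapping theorem using a.s.\ continuity of Brownian paths. The paper explicitly attributes this functional viewpoint to \citet{Billingsley1999} in the paragraph preceding its own Anscombe-type theorem. By contrast, Anscombe's original 1952 proof is elementary and does not pass through an invariance principle: it works directly with the discrete sequence $Z_n=S_n/(\sigma\sqrt{n})$, uses Kolmogorov's maximal inequality to verify~(\ref{UniformContinuity}), and concludes via a subsequence/tightness argument. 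Your approach is shorter once Donsker's theorem is available and generalises immediately to any process satisfying a functional CLT; Anscombe's original argument has the advantage of requiring only the ordinary CLT together with a fluctuation bound, and applies to sequences $\{Z_n\}$ that need not arise from partial sums.

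One small point worth tightening in a final write-up: the identity $W_a(\tau_a/a)=S_{\tau_a}/(\sigma\sqrt{a})$ relies on $\tau_a\in\N$ so that $\trunc{a\cdot\tau_a/a}=\tau_a$; this is harmless here but should be stated. Your handling of the compact-interval truncation and of the discontinuity set of the evaluation map is accurate.
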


Anscombe's result belongs to the fundamental insights on sequential methodologies and can be found in
various monographs such as \citet{Siegmund1985} or \cite{GoshEtAl1997}. It is worth mentioning that in its basic form it addresses
a sequence $ \{ Z, Z_n \} $ which converges weakly, i.e. $ Z_n \stackrel{d}{\to} Z $, as $ n \to \infty $. Provided
that given $ \varepsilon > 0 $ there exists $ \delta > 0 $ and $ n_0 \in \N $, such that
\begin{equation}
\label{UniformContinuity}
  P \left( \max_{\{ k : |k-n| < n \delta \}} | Z_k - Z_n| > \varepsilon \right) < \varepsilon,
\end{equation}
a condition called {\em uniform continuity in probability}, Anscombe shows that $ Z_{\tau_a} \stackrel{d}{\to} Z $, as $ a \to \infty $. 
His results have been adopted to many applications and generalized considerably.  
For example, when strengthening (\ref{AnscCond}) to 
\[
  P\left( \left| \frac{\tau_a}{a \lambda} - 1 \right| > \delta_a \right) = O( \delta_a^{1/2} ),
\]
where $ \lambda^{-1} \le \delta_a \to 0 $, as $ a \to \infty $, then a Berry-Esseen result holds true, that is the distribution
of $ S_{\tau_a}/ (\sigma \tau_a) $ converges uniformly to the standard normal distribution function, cf. \citet[Theorem~2.7.3]{GoshEtAl1997}.
\citet{Gut1991} established Anscombe-type laws of the iterated logarithm by strengthening (\ref{UniformContinuity}) to
\begin{equation}
\label{UniformContinuity2}
 \sum_{n=1}^\infty P \left( \max_{\{ k : |k-n| < n \delta \}} | Z_k - Z_n| > \varepsilon \right) < \infty.
\end{equation}
For further extensions in this direction, e.g., to $U$-statistics, and applications we refer to \citet{GoshDasgupta1980}, \cite{Mukho1981},
and \citet{MukhoVik1985}, amongst others. Finally, it is known that Anscombe's central limit theorem stated in Theorem~\ref{ThAnscombe} extends to a functional central limit theorem with Brownian motion as the limit process; we refer to \cite{Billingsley1999}, \cite{Larsson2000} and \cite{Gut2009}, amongst others.

Particularly having in mind complex applications where concrete definitions of the random time horizon may be
unknown to the statistician when designing the sequential procedure, it is remarkable 
that the result holds true without {\em any} condition on the dependence of the increments of the partial sums in
Theorem~\ref{ThAnscombe},  i.e. $ \{ X_n : n \ge 1 \} $, and the family of stopping times
$ \{ \tau_a : a > 0 \} $. Even stopping times which analyze the random increments directly can be used
without affecting the asymptotic normality for  $ a \to \infty $. Indeed,
a standard example for a family $ \{ \tau_a : a > 0 \} $ satisfying Anscombe's condition (\ref{AnscCond}) is the
first passage time of the random walk related to an i.i.d. sequence $ X_1, X_2, \dots $ with common mean $ \mu \not= 0 $, 
\[
  \tau_a = \inf \{ T \in \N : S_T > a \}, \qquad a > 0,
\]
e.g., costs associated with the continuation of the sequential procedure,
where as in the above theorem and, with some abuse of the notation used in previous sections,
\[
  S_T = \sum_{i=1}^T X_i, \quad T \in \N.
\]
Then it is well known that
\[
  \frac{\tau}{a} \stackrel{a.s.}{\to} \lambda = \frac{1}{\mu},
\]
as $ a \to \infty $, cf. the proof of Lemma~2.9.2 in \cite{GoshEtAl1997}.

As a second important example let us consider the following sequential estimation setting discussed
by Anscombe  in his 1952 paper. That example also shows that Anscombe's results address a deficiency of
sequential procedures such as the sequential probability ratio test, namely the fact that an open-ended 
stopping rule which is applied in order to stop sampling as soon as it is possible to decide in which
subset of the parameter space a parameter lies may lead to samples sizes which are too small for estimation
of parameters, cf. the discussion in \citet[Ch. 5]{Siegmund1985}. That early-stopping issue can be approached as follows.
Aiming at estimating  a parameter $ \theta $ from the data we sample until an estimate of the
estimator's dispersion is less or equal some threshold $ c_a $, where $ c_a \downarrow 0 $ as $ a \to \infty $, 
and then estimate the parameter by an estimator $ \wh{\theta}_n $ which is assumed to converge in distribution after
standardization. Given the family 
\[
  \tau_a = \inf \{ n \in \N : \widehat{\operatorname{s.d.}}( \widehat{\theta}_n ) \le c_a \}, \qquad a > 0,
\]
defined in this way satisfies
\[ 
  \tau_a / \tau^*_a \to 1, \qquad \text{in probability, as $ a \to \infty $},
\] 
where 
\[
  \tau^*_a= \inf \{ n \in \N : \sqrt{ \Var( \widehat{\theta}_n ) } \le c_a \}, \qquad a > 0,
\]
is the corresponding least sample
size such that the true dispersion of the estimator is less or equal than $ c_a $, Anscombe shows that
the above sequential sampling scheme yields an estimator which inherits the asymptotic distribution
with the true dispersion replaced by $ c_a $. This means, one may achieve estimation with given small
accuracy $ c_a $.

Our interest is now to extend the weak convergence results for the cross-validation criterion to the case of a random time horizon. 
We shall see that the time horizon can indeed be replaced by a family of random indices under quite general conditions, but the
interpretation differs: By randomizing the time horizon in such a controlled way instead of fixing it at a large value, 
we may ensure certain properties, such as a guaranteed accuracy of some estimator of interest, 
in the case that a (closed-end) stopping rule did not lead to a signal before the time horizon.
This is particularly beneficial when monitoring a time series automatically and expecting a signal indicating a change only
with low probability, such that the typically outcome is that the procedure runs until time $T$. Having reached the time horizon $T$,
one might be interested in analyzing the sample obtained in this way using classic methods of estimation and testing.

Another motivation is that there may be events which should trigger immediate termination of a monitoring procedure. As an example, suppose
one monitors the mean of an investment portfolio by applying the procedure $ S_T^- $ to the (discounted) value process of the
portfolio, 
in order to get an alarm if the investment strategy performs poor. But in case that the associated risk $ r_t $, which can be measured by a dispersion statistics such as the standard deviation or by value-at-risk, cf. \citet{Steland2012Book}, or the risk of some other
important financial variable exceeds an upper risk limit, one should terminate immediately. This gives rise to a family of 
stopping times such as  $ \tau_a = \inf \{ n < T'+1 : r_n > \overline{r}_a \} $, where $ T' = T $ or $ T' = \infty $, and 
$ \overline{r}_a $ is the upper risk limit parameterized by $ a > 0 $.

In what follows, we shall now discuss a random time horizon limit theorem for the cross-validation process, 
which is affected when applying a Anscombe-type random stopping
procedure to the time horizon of the detectors $ S_T^+ $ and $ S_T^- $ defined in (\ref{STplus}) and (\ref{STminus}), 
respectively. However, it will turn out that the arguments go through for many other processes as well.

Recall that the cross-validation process $ C_{T,s}(h) $ is dominated by the process $ L_T(s) $ and satisfies
\[
  \wt{C}_T(s) = T C_T(s)   \Rightarrow \calL_\xi(s),
\]
as $ T \to \infty $. We are interested in the randomly stopped sequential processes
\[
  \wt{C}_{\tau_a} = \wt{C}_T \bigr|_{T=\tau_a}, \qquad a > 0,
\]
and
\[
  \xi^*_{\tau_a} = \xi_T^*  \bigr|_{T=\tau_a}, \qquad a > 0.
\]

The following main result of this section provides
an Anscombe-type theorem for $ \wt{C}_{\tau_a} $. Its proof is based on the key observation that in our setting the
random stopping can be interpreted as a random change of time. 

\begin{theorem} 
\label{AnscombeTypeTheorem}
Let $ \{ \tau_a : a > 0 \} $ be a family of random variables taking values in $ \N $ such that condition (\ref{AnscCond}) holds
true for some $ \lambda \le 1 $. Then the  process $ \{ \wt{C}_{\tau_a} : a > 0 \} $ with random time horizon $ \tau_a $
satisfies the functional central limit theorem
\[
  \wt{C}_{\tau_a}(s) \Rightarrow \calL_{\xi}( \lambda s ),
\]
as $ a \to \infty $, in $ D([s_0,1];\R)$. Further, if $ \Xi = \{ \xi^1, \dots, \xi^M \} $ as in Subsection~\ref{CrossValidatedPr},
\begin{equation}
\label{CrossValLambda}
  \xi_{\tau_a}^*(s) \Rightarrow \argmin_{\xi \in \Xi} \calL_\xi( \lambda s ),
\end{equation}
as $ a \to \infty $, in $D([s_0,1];\R) $, provided the assumptions imposed there hold true.
\end{theorem}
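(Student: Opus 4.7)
The plan is to interpret the randomly stopped cross-validation process as a random change of time applied to the deterministic-horizon process, and to combine the Skorohod/Dudley/Wichura representation with a composition/continuous-mapping argument. By Theorem~\ref{FCLTQT}, for the deterministic parameter $a \to \infty$ one has $\wt{C}_a(\cdot) \Rightarrow \mathcal{L}_\xi(\cdot)$ in $D([s_0,1];\mathbb{R})$, and by Theorem~\ref{ThLT} the limit process $\mathcal{L}_\xi$ has a.s.\ continuous paths. Introduce the random time change $\phi_a(s) = \tau_a s/a$; under condition~(\ref{AnscCond}) with $\lambda \le 1$, $\phi_a \to \lambda \cdot \mathrm{id}$ uniformly on $[s_0,1]$ in probability, and $\phi_a(s) \in [0,1]$ for all $s \in [s_0,1]$ with probability tending to one. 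The key embedding step is to recognise that the randomly stopped process admits the identification $\wt{C}_{\tau_a}(s) = \wt{C}_a(\phi_a(s)) + o_P(1)$, uniformly in $s \in [s_0,1]$, i.e.\ the substitution $T = \tau_a$ in $\wt{C}_T$ may be absorbed at the process level into the scalar random time change $s \mapsto \phi_a(s)$.

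Passing to a Skorohod representation on a common probability space, we may assume that $\wt{C}_a \to \mathcal{L}_\xi$ uniformly on $[s_0,1]$ almost surely jointly with $\phi_a \to \lambda \cdot \mathrm{id}$ uniformly. Since the composition map $(f,\phi) \mapsto f \circ \phi$ is continuous from $C([s_0,1]) \times C([s_0,1];[0,1])$ into $C([s_0,1])$ under the uniform topology, we obtain
\[
  \sup_{s \in [s_0,1]} \bigl| \wt{C}_a(\phi_a(s)) - \mathcal{L}_\xi(\lambda s) \bigr| \stackrel{P}{\to} 0,
\]
which, combined with the embedding, yields the claimed weak convergence $\wt{C}_{\tau_a}(\cdot) \Rightarrow \mathcal{L}_\xi(\lambda \cdot)$ in $D([s_0,1];\mathbb{R})$.

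For the assertion on $\xi^*_{\tau_a}$, the same time-change argument applied coordinate-wise for each $\xi \in \Xi$, together with the finiteness of $\Xi$, yields the joint weak convergence $\{\bar{C}_{\tau_a,\cdot}(\xi) : \xi \in \Xi\} \Rightarrow \{\mathcal{L}_\xi(\lambda\cdot) : \xi \in \Xi\}$ in $(D([s_0,1];\mathbb{R}))^M$. The operator $\argmin_{\xi \in \Xi}$, with ties broken by the smallest-index convention, is continuous at almost every realisation of the continuous limit, so an application of the continuous mapping theorem produces~(\ref{CrossValLambda}).

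The main obstacle is the rigorous verification of the embedding identity $\wt{C}_{\tau_a}(s) = \wt{C}_a(\phi_a(s)) + o_P(1)$. Replacing the horizon $T$ by $\tau_a$ simultaneously perturbs both the summation range $[\trunc{Ts_0},\trunc{Ts}]$ and the kernel bandwidth $h = T/\xi$, so one must argue that this joint perturbation is captured by the single scalar time change $\phi_a$. This relies on the stochastic-integral representations of $T L_T(s)$ and $T^2 Q_T(s)$ developed in the proofs of Theorems~\ref{ThLT} and~\ref{ThQT}, together with the uniform convergence of the kernel-weighted normalising factors $N_T$ and $g_T^{v,s}$ established therein, so that the remainder induced by the random rescaling of the bandwidth and the summation range is asymptotically negligible.
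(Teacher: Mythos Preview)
Your approach is essentially the same as the paper's: interpret the substitution $T=\tau_a$ as a random change of time $s\mapsto \tau_a s/T'$ applied to the deterministic-horizon process, establish joint convergence of the process and the time change (the latter to the deterministic map $s\mapsto\lambda s$ by condition~(\ref{AnscCond})), and conclude via the continuous mapping theorem for composition. The second assertion is handled identically.

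The one noteworthy difference is how the embedding is stated. The paper writes the \emph{exact} identity $\wt{C}_{\tau_a}(s)=\wt{C}_{T'}\bigl(\tfrac{\tau_a}{T'}s\bigr)$ with $T'=\lceil a\rceil$, and then proceeds directly to the joint convergence $(\wt{C}_{T'},\Phi_{T'})\Rightarrow(\calL_\xi,\Phi)$ and continuous mapping; it does not introduce an $o_P(1)$ remainder or discuss the perturbation of the bandwidth $h=T/\xi$ and the lower summation limits $\trunc{T\gamma}$, $\trunc{Ts_0}$, relying implicitly on the remark that the relevant dependence of the process on $(T,s)$ is through the product $Ts$. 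Your formulation $\wt{C}_{\tau_a}(s)=\wt{C}_a(\phi_a(s))+o_P(1)$ is more cautious and correctly flags that replacing $T$ by $\tau_a$ perturbs more than the upper summation index; your sketch of how to absorb this via the stochastic-integral representations and the uniform convergence of $N_T$ and $g_T^{v,s}$ is a reasonable route to close the gap. In effect, you and the paper take the same path, but you make explicit a verification step that the paper treats as immediate.
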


\begin{proof}  The proof draws on \citet{Billingsley1999}, but our setting differs slightly. For constants $ A \le B $ and
$ C \le D $ let $ D_0( [A,B]; [C,D] ) $ 
denote the set of those elements $ f $ of $ D( [A,B]; [C,D] ) $ that are nondecreasing and satisfy $ C \le f(t) \le D $ for all $ t $;
$ C([A,B];[C,D]) $ is defined accordingly.
Introducing the parameter $ T' = \lceil a \rceil $, $ a > 0 $, we can embed $ \wt{C}_{\tau} $ into the sequence 
$ \{ \wt{C}_{T'} : T' \ge 1 \} $ of processes via the crucial identity 
\[
  \wt{C}_{\tau_a}(s) = \wt{C}_{T'} \left( \frac{ \tau_a }{ T' } s \right), \qquad s \in [s_0,1], \quad T' \in \N,
\]
that is
\begin{equation}
\label{CrucialRelation}
  \wt{C}_{\tau_a} = \wt{C}_{T'} \circ \Phi_{T'}, \qquad T' \in \N,
\end{equation}
where 
\[
  \Phi_{T'}(s) = \frac{\tau_a}{T'} s, \qquad s \in [s_0,1], \  T' \in \N.
\]
Notice that $ \frac{\tau_a}{a} \to \lambda $, as $ a \to \infty $, and $ \lambda \le 1 $ imply that
$ \Phi_{T'} $ takes values in $ C( [s_0,1]; [\lambda s_0 - \varepsilon,1] ) $ for large enough $ T' $,
given any arbitrary small $ \varepsilon > 0 $.
The result now follows easily from the representation (\ref{CrucialRelation}). We have
the joint weak convergence 
\[
  ( \wt{C}_{T'}, \Phi_{T'} ) \Rightarrow (\calL_\xi, \Phi ),
\] 
as $ T' \to \infty $, in the product space $ D([\lambda s_0-\varepsilon,1]; \R) \otimes D_0([s_0,1];[\lambda s_0-\varepsilon,1]) $, where
$ \Phi \in C_0( [s_0,1]; [\lambda s_0,1] ) $ is the multiplication with $ \lambda $, i.e. $ \Phi(s) = \lambda s $
for $ s \in [s_0,1] $. Indeed,
$ \Phi_{T'} $ converges weakly to the non-random continuous element $ \Phi $, since by
virtue of Anscombe's assumption (\ref{AnscCond}) 
\[
  \sup_{s \in [s_0,1]} | \Phi_{T'}(s) - \Phi(s) | \le \left| \frac{\tau_a}{a} - \lambda \right| \stackrel{P}{\to} 0,
\]
as $ a \to \infty $, which implies $ \Phi_{T'} \Rightarrow \Phi $, as $ T' \to \infty $.
Thus the result follows by an 
application of the continuous mapping theorem, since the composition of mappings
is a continuous functional, cf. \citet[p.151]{Billingsley1999}, and we can conclude that
\[
  \wt{C}_{\tau_a}(s) = \wt{C}_{T'} \circ \Phi_{T'}(s) \Rightarrow \calL_\xi \circ \Phi (s) = \calL_\xi( \lambda s),
\]
as $ a \to \infty $. The proof of (\ref{CrossValLambda}) is left to the reader.
\end{proof}

\begin{remark} The above result and its method of proof deserve some discussion.
\begin{itemize}
\item[(i)] An inspection of the proof of Theorem~\ref{AnscombeTypeTheorem} reveals that 
the arguments carry over to any empirical process $ X_T(s) $, particularly partial sum processes, such that the (functional) dependence on $ T $ and $ s $ is via multiplication $ T s $.
\item[(ii)] By either restricting the domain $ [s_0,1] $ to $ [s_0,1/\lambda] $ or taking the natural
extension of the limit theorems of the previous section to the spaces $ D([A,B];\R) $ for $ [A,B] \subset [s_0,\infty) $, 
one may easily generalize the result to an arbitrary limit $ \lambda \in (0, \infty) $ of $ \tau_a / a $.
\item[(iii)] The proof relies on the joint convergence of the process of interest, $ \wt{C}_{T'} $, and the
transformations, $ \Phi_{T'} $, which holds true if $ \Phi_{T'} $ converges to a non-random
function. The latter is guaranteed by condition (\ref{AnscCond}), which already appeared
in \citet{Anscombe1952}. However, the more general condition
\begin{equation}
\label{ConditionAnscombe2}
  \frac{ \tau_a }{ a } \stackrel{d}{\to} \Lambda, 
\end{equation}
as $ a \to \infty $, for some random variable $ \Lambda $, requires an explicit proof of the
joint weak convergence. This may require much more knowledge on $ \wt{C}_{T'} $, the 
definition of $ \tau_a $ and the dependence between both. Only in the case that 
$ \{ \wt{C}_{T'} : T' \ge 1 \} $ and $ \{ \tau_a : a > 0 \} $ are independent, the joint
weak convergence again follows. 
\end{itemize}
\end{remark}

Our discussion suggests to formulate the following corollary for the important special case that
the random experiment conducted to determine the time horizon is independent from the
observations, in order to extend the scope of our results to families of stopping times satisfying
(\ref{ConditionAnscombe2}).

\begin{corollary} Let $ \{ \tau_a : a > 0 \} $ be a family of random variables taking values in $ \N $.
If $ \{\tau_a : a > 0 \} $ is independent from $ \{ X_{Tn} : 1 \le n \le T, T \ge 1 \} $ and satisfies
\[
  \frac{\tau_a}{a} \stackrel{d}{\to} \Lambda,
\]
as $ a \to \infty $, for some random variable $ \Lambda $ taking values in $ (0,1] $,
then the randomly stopped process $ \wt{C}_{\tau_a} $ satisfies
\[
  \wt{C}_{\tau_a}(s) \Rightarrow \calL_{\xi}( s \Lambda )
  = -2  \int_0^{s\Lambda} 
     \frac{ \int_0^u K( \xi(u-v) ) \, d B_\delta^\sigma(v) }{ \int_0^u K( \xi(u-v) ) \, dv } \, d B_\delta^\sigma(u)
\]
as $ a \to \infty $. Further, given the assumptions imposed in Subsection~\ref{CrossValidatedPr}, then
\[
  \xi_{\tau_a}^*(s) \Rightarrow \argmin_{\xi \in \Xi} \calL_\xi( \Lambda s ),
\]
as $ a \to \infty $, in $D([s_0,1];\R) $, where $ \Xi = \{ \xi^1, \dots, \xi^M \} $. 
\end{corollary}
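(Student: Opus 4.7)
The plan is to follow the same embedding strategy as in Theorem~\ref{AnscombeTypeTheorem}, but to exploit the independence assumption to obtain the required joint weak convergence in spite of the fact that the rescaling limit $ \Lambda $ is now random.

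Set $ T' = \lceil a \rceil $ and retain the key identity
\[
  \wt{C}_{\tau_a}(s) = \wt{C}_{T'}\left( \frac{\tau_a}{T'} s \right)
  = \wt{C}_{T'} \circ \Phi_{T'}(s), \qquad s \in [s_0,1],
\]
with $ \Phi_{T'}(s) = (\tau_a / T') s $. By Theorem~\ref{FCLTQT} one has the marginal convergence $ \wt{C}_{T'} \Rightarrow \calL_\xi $ in $ D([s_0,1];\R) $, while the assumption $ \tau_a/a \stackrel{d}{\to} \Lambda $ together with $ T'/a \to 1 $ yields $ \Phi_{T'} \Rightarrow \Phi $ in $ D_0([s_0,1];[0,1]) $, where $ \Phi(s) = \Lambda s $. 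Because $ \Lambda $ takes values in $ (0,1] $, the limit $ \Phi $ lies in $ C([s_0,1];[0,1]) $ almost surely and is strictly increasing.

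Next I would upgrade these two marginal convergences to joint convergence of the pair $ ( \wt{C}_{T'}, \Phi_{T'} ) $ in the product Skorohod space $ D([s_0,1];\R) \otimes D_0([s_0,1];[0,1]) $. This is where the independence hypothesis enters: since $ \wt{C}_{T'} $ is a measurable functional of $ \{ X_{Tn} \} $ and $ \Phi_{T'} $ depends only on $ \tau_a $, the two sequences are independent, and independent sequences that converge weakly marginally converge jointly to the product law of their individual limits. Thus
\[
  ( \wt{C}_{T'}, \Phi_{T'} ) \Rightarrow ( \calL_\xi, \Phi ),
\]
where on the limit side $ \calL_\xi $ and $ \Lambda $ are independent.

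The composition map $ (f,g) \mapsto f \circ g $ is continuous at any pair $ (f,g) $ such that $ g $ is continuous and strictly monotone with range contained in the domain of $f$; see \citet[p.~151]{Billingsley1999}. Since the limiting $ \Phi $ almost surely has this form, the continuous mapping theorem gives
\[
  \wt{C}_{\tau_a} = \wt{C}_{T'} \circ \Phi_{T'} \Rightarrow \calL_\xi \circ \Phi = \calL_\xi( \Lambda \,\cdot\, ),
\]
which is the first assertion upon writing out the integral representation of $ \calL_\xi $ from (\ref{DefL}). For the second assertion, the argument from the corollary in Subsection~\ref{CrossValidatedPr} gives joint weak convergence of $ \{ T \bar{C}_{T,\cdot}(\xi) : \xi \in \Xi \} $ to $ \{ \calL_\xi : \xi \in \Xi \} $ in the finite product space $ (D([s_0,1];\R))^M $. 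Applying the same composition argument coordinate-wise and then the (almost surely) continuous smallest-argmin functional over the finite grid $ \Xi $ yields the second claim.

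The main obstacle is the step requiring joint convergence of $ ( \wt{C}_{T'}, \Phi_{T'} ) $: without the independence hypothesis one would need to control the dependence between $ \tau_a $ and the observations, which in general requires additional structural assumptions; independence bypasses this issue cleanly. A minor technical point is that the limit $ \Lambda $ is supported on $ (0,1] $, which ensures that $ \Phi $ is a.s. strictly increasing and maps into the domain of $ \calL_\xi $, so that the composition functional is a.s.\ continuous at the limit and the continuous mapping theorem is applicable.
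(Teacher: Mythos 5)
Your proposal is correct and follows essentially the same route the paper intends: the embedding $\wt{C}_{\tau_a}=\wt{C}_{T'}\circ\Phi_{T'}$ with $T'=\lceil a\rceil$ from the proof of Theorem~\ref{AnscombeTypeTheorem}, joint weak convergence of $(\wt{C}_{T'},\Phi_{T'})$ obtained from the independence hypothesis (exactly the point made in Remark 5.1(iii)), and the continuous mapping theorem for the composition functional. The paper leaves these details implicit, and your write-up supplies them correctly, including the observation that randomness of $\Lambda$ is precisely why independence is needed to pass from marginal to joint convergence.
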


\section*{ACKNOWLEDGEMENTS}

Parts of this paper were written during research stays at the University of Manitoba, Canada, and Wroc\l aw University of Technology, Poland.
I thank the editor, Professor Nitis Mukhopadhyay, for inviting me to contribute this article.



\newpage

\end{document}